\definecolor{airforceblue}{rgb}{0.36, 0.54, 0.66}
\definecolor{amethyst}{rgb}{0.6, 0.4, 0.8}
\definecolor{aqua}{rgb}{0.0, 1.0, 1.0}
\definecolor{atomictangerine}{rgb}{1.0, 0.6, 0.4}
\definecolor{bananayellow}{rgb}{1.0, 0.88, 0.21}
\definecolor{bittersweet}{rgb}{1.0, 0.44, 0.37}
\definecolor{blue-violet}{rgb}{0.54, 0.17, 0.89}
\definecolor{bostonuniversityred}{rgb}{0.8, 0.0, 0.0}
\definecolor{brilliantlavender}{rgb}{0.96, 0.73, 1.0}
\definecolor{carminered}{rgb}{1.0, 0.0, 0.22}
\definecolor{ceruleanblue}{rgb}{0.16, 0.32, 0.75}
\definecolor{coquelicot}{rgb}{1.0, 0.22, 0.0}
\definecolor{deepcerise}{rgb}{0.85, 0.2, 0.53}
\definecolor{deepmagenta}{rgb}{0.8, 0.0, 0.8}
\definecolor{deepskyblue}{rgb}{0.0, 0.75, 1.0}
\definecolor{egyptianblue}{rgb}{0.06, 0.2, 0.65}
\definecolor{electricviolet}{rgb}{0.56, 0.0, 1.0}
\tikzstyle{rootN}  =   [circle,scale=0.5]
\tikzstyle{rootA1}  =   [circle,scale=0.5,ball color=deepcerise]
\tikzstyle{rootB1} =   [circle,scale=0.5,ball color=deepcerise]
\tikzstyle{rootA2}  =   [diamond,scale=0.5,ball color=aqua]
\tikzstyle{rootB2}  =   [diamond,scale=0.5,ball color=aqua]
\tikzstyle{rootA3}  =   [star,scale=0.5,ball color=electricviolet]
\tikzstyle{rootB3}  =   [star,scale=0.5,ball color=electricviolet]
\tikzstyle{cluster} = [draw=black,thin,rounded corners,inner sep=\clustersep]
\def\clustersep{3pt}
\def\clusterpicture{\begin{tikzpicture}[node distance=0pt]}
\def\endclusterpicture{\end{tikzpicture}}
\def\Root(#1)#2(#3){
	\path(#1) node[root#2](#3){};
}
\def\RootL(#1,#2)(#3)#4{
	\Root(#1,#2)(#3)
	\node (#3n) [color=purple!50!black,below=of #3.south,scale=0.5,xshift=0pt,yshift=-6pt,font=\ttfamily, anchor=north] {\smash{\raise-4pt\hbox{#4}}};  
}
\def\Cluster(#1)#2{
	\node[cluster,fit=#2] (#1) {};
}
\def\ClusterL(#1)#2#3{
	\Cluster(#1){#2}
	\node (#1n) [color=black,below=of #1.south east, font=\ttfamily, scale=0.5, anchor=north west, xshift=-5pt]{\smash{#3}};
}
\def\frob(#1)(#2){\path[draw,-](#1)--(#2){}}
\def\ecluster(#1)(#2)#3{
	\path(#1) coordinate (#2a){}; 
	\path($(#1)+(0.3,0)$) coordinate (#2c){}; 
	\path($(#1)+(#3/2,0)-(0,#3)$) coordinate (#2t); 
	\path[draw,-,thick] (#2a)--($(#2a)-(0,0.15)$);
	\path[draw,-,thick] (#2a) edge[out=90,in=90] (#2c);
	\path[draw,-,thick] (#2c) edge[out=270,in=180] (#2t);
	\expandafter\edef\csname lastnode#2\endcsname{#2t}
	\expandafter\let\csname cltype#2\endcsname\epsilon
}
\def\ocluster(#1)(#2)#3{
	\path(#1) coordinate (#2a); 
	\path($(#1)-(0,0.1)$) coordinate (#2b); 
	\path($(#1)+(#3/2,0)-(0,#3)$) coordinate (#2t); 
	\path[draw,-,thick] ($(#2a)+(0,0.15)$)--(#2b);
	\path[draw,-,thick] (#2b) edge[out=270,in=180] (#2t);
	\expandafter\edef\csname lastnode#2\endcsname{#2t}
	\expandafter\let\csname cltype#2\endcsname\omega
}
\def\uclustergen(#1)(#2)#3#4{
	\path($(#1)+(0.1,0)$) coordinate (#2c){}; 
	\path($(#2c)-(0,0.1)$) coordinate (#2d){}; 
	\path($(#2c)+(#3/2,0)-(0,#3)$) coordinate (#2t); 
	\path[draw,-,thick] ($(#2c)+(0,0.15)$)--(#2d);
	\path[draw,-,thick] (#2d) edge[out=270,in=180] (#2t);
	\path($(#1)$) coordinate (#2a){}; 
	\path($(#2a)-(0,0.1)$) coordinate (#2b){}; 
	\path($(#2d)+(#3/2,0)-(0,#3)$) coordinate (#2u); 
	\path[draw,-,thick#4] ($(#2a)+(0,0.15)$)--(#2b);          
	\path[draw,-,thick#4] (#2u) edge[out=180,in=270] (#2b);   
	\expandafter\edef\csname lastnode#2\endcsname{#2t}
	\expandafter\let\csname cltype#2\endcsname\upsilon
}
\def\ucluster(#1)(#2)#3{\uclustergen(#1)(#2){#3}{}}
\def\uclusterbr(#1)(#2)#3{\uclustergen(#1)(#2){#3}{,shorten >=1.1pt}}
\def\clusterNWlabel(#1)#2{
	\path(#1a) node[color=blue,anchor=east] {$#2$}; 
}
\def\clusterSWlabel(#1)#2{
	\path(#1t) node[color=blue,anchor=east] {$#2$}; 
}
\def\subroot #1-#2 #3{
	\edef\lastnode{\csname lastnode#1\endcsname}
	\path($(\lastnode)+(#3,0)$) node[root](#2){};
	\path[draw,-,thick](\lastnode)--(#2){};
	\expandafter\def\csname lastnode#1\endcsname{#2}
}
\def\subrootname #1-#2 #3#4{
	\subroot #1-#2 {#3}
	\node (#2n) [color=purple!50!black,below=of #2.south,scale=0.7,yshift=0pt,font=\ttfamily, anchor=north] {$#4$};  
}
\def\endcluster #1 #2{
	\edef\lastnode{\csname lastnode#1\endcsname}
	\path($(\lastnode)+(#2,0)$) node[coordinate](#1end){};
	\path[draw,-,thick](\lastnode)--(#1end){};
	\expandafter\ifx\csname cltype#1\endcsname\upsilon
	\path[draw,-,thick]($(\lastnode)-(0,0.1)$)--($(#1end)-(0,0.1)$){};  
	\else\fi
	\expandafter\def\csname lastnode#1\endcsname{#1end}
}
\def\subcluster #1 #2-#3 (#4,#5){
	\edef\lastnode{\csname lastnode#2\endcsname}
	\expandafter\ifx\csname cltype#2\endcsname\upsilon
	\expandafter\ifx #1u
	\uclusterbr($(\lastnode)+(#4,0)$)(#3){#5}               
	\else
	\csname#1cluster\endcsname($(\lastnode)+(#4,0)$)(#3){#5}
	\fi
	\else
	\csname#1cluster\endcsname($(\lastnode)+(#4,0)$)(#3){#5}
	\fi
	\path[draw,-,thick](\lastnode)--(#3a){};
	\expandafter\ifx\csname cltype#2\endcsname\upsilon
	\path[draw,-,thick]($(\lastnode)-(0,0.1)$)--($(#3a)-(0,0.1)$){};  
	\expandafter\ifx\csname cltype#3\endcsname\upsilon      
	\path($(#3c)+(1.1pt,0)$) node[coordinate] (#3e) {};
	\path($(#3d)+(1.1pt,0)$) node[coordinate] (#3f) {};
	\path[draw,-,thick] (#3a)--($(#3e)-(2.2pt,0)$) {};  
	\path[draw,-,thick] (#3b)--(#3f) {};  
	\expandafter\def\csname lastnode#2\endcsname{#3e}
	\else
	\expandafter\def\csname lastnode#2\endcsname{#3a}
	\fi 
	\else
	\expandafter\def\csname lastnode#2\endcsname{#3a}
	\fi
}
\theoremstyle{definition}
\newtheorem{defn}{Definition}[section]
\newtheorem{defs}[defn]{Definitions}
\theoremstyle{plain}
\newtheorem{thm}[defn]{Theorem}
\newtheorem{prop}[defn]{Proposition}
\newtheorem{lem}[defn]{Lemma}
\newtheorem{rem}[defn]{Remark}
\newtheorem{ex}[defn]{Example}
\theoremstyle{remark}
\newtheoremstyle{case}{}{}{}{}{}{:}{ }{}
\theoremstyle{case}
\DeclareFontFamily{U}{wncy}{}
\DeclareFontShape{U}{wncy}{m}{n}{<->wncyr10}{}
\DeclareSymbolFont{mcy}{U}{wncy}{m}{n}
\DeclareMathSymbol{\Sh}{\mathord}{mcy}{"58} 
\newcommand\blfootnote[1]{%
	\begingroup
	\renewcommand\thefootnote{}\footnote{#1}%
	\addtocounter{footnote}{-1}%
	\endgroup
}
\newcommand{\Spec}{\textrm{Spec}}
\newcommand{\QQ}{\mathbb Q}
\newcommand{\NN}{\mathbb N}
\newcommand{\ZZ}{\mathbb Z}
\newcommand{\RR}{\mathbb R}
\newcommand{\cs}{\mathfrak s}
\newcommand{\ct}{\mathfrak t}
\newcommand{\cp}{\mathfrak p}
\newcommand{\cR}{\mathfrak R}
\newcommand{\regX}{\mathcal X}
\newcommand{\stY}{\mathcal Y}
\newcommand{\nerJ}{\mathcal J}
\newcommand{\disc}{\textrm{disc}}
\newcommand{\ord}{\textrm{ord}}
\newcommand{\numcirclemodi}[1]{{\setlength\fboxrule{0pt}\fbox{\tcbox[colframe=blue,colback=white,shrink tight,boxrule=0.5pt,extrude by=.7mm]{\small #1}}}}
\title{Differential Forms on Hyperelliptic Curves with Semistable Reduction}
\author{Sabrina Kunzweiler}
\date{}
\begin{document}
\maketitle
\begin{abstract}
	ABSTRACT. Let $C$ be a hyperelliptic curve over a local field $K$ with odd residue characteristic, defined by some affine Weierstraß equation $y^2=f(x)$. We assume that $C$ has semistable reduction and denote by $\regX \rightarrow \Spec\, \mathcal{O}_K$ its minimal regular model with relative dualising sheaf $\omega_{\regX/ \mathcal{O}_K}$. We show how to directly read off a basis for $H^0(\regX,\omega_{\regX/\mathcal{O}_K})$ from the cluster picture of the roots of $f$. Furthermore we give a formula for the valuation of $\lambda$ such that $\lambda \cdot \frac{dx}{2y} \land \dots \land x^{g-1}\frac{dx}{2y}$ is a generator for $\det H^0(\regX,\omega_{\regX/\mathcal{O}_K})$. 
	\blfootnote{\textit{Date:} May 2, 2020.\\2010 \textit{Mathematics Subject Classification.} Primary: 11G20, 14H25; Secondary: 11G40.\\\textit{Key words and phrases.} Hyperelliptic curves, semistable reduction.}
\end{abstract}


\section{Introduction}
Let $K$ be a local field with ring of integers $R$,  normalised valuation $v$ and uniformiser $\pi$. For an element $r \in R$, we write $\bar{r}$ for the reduction modulo $\mathfrak{p}$. The separable closure of $K$ is denoted by $K^{sep}$ and the algebraic closure by $\bar{K}$. We assume that the residue field $k = R/(\pi)$ is perfect and has characteristic $p \neq 2$.

Let $C/K$ be a hyperelliptic curve given by a Weierstraß equation 
\[C : y^2 =f(x).\] 
Throughout this paper, we will always assume that the curve $C$ has semistable reduction and genus $g > 1$. Since $p \neq 2$, it is easy to read off from the polynomial $f$ whether $C$ is semistable and determine the semistable reduction. This is described in \cite{bouw2017computing} and \cite{dokchitser2018arithmetic}.

To a Weierstraß equation, we associate the differential forms
\begin{align*}
\omega_0 = \frac{dx}{2y},\; \omega_1 = \frac{xdx}{2y},\; \dots, \; \omega_{g-1} = \frac{x^{g-1}dx}{2y}.
\end{align*}
These form a basis of $H^0(C, \Omega_{C/K})$. Let $\regX \rightarrow \Spec R$ be the minimal regular model of $C$ and $\omega_{\regX/R}$ the relative dualising sheaf \cite[Definition 6.4.18.]{liu2002alggeo}. In our situation, the relative dualising sheaf is isomorphic to the canonical sheaf \cite[Theorem 6.4.32]{liu2002alggeo}. We have that $H^0(C, \Omega_{C/K}) = H^0(\regX,\omega_{\regX/R}) \otimes_R K$ and $H^0(\regX,\omega_{\regX/R})$ is a free $R$-module of rank $g$. So $\det H^0(\regX,\omega_{\regX/R}) := \bigwedge^g H^0(\regX,\omega_{\regX/R})$ is free of rank one over $R$. 

In this paper, we are going to study the following two problems:
\begin{enumerate}
	\item Let $ \omega := \omega_0 \land \dots \land \omega_{g-1} \in  \det H^0(C,\Omega^1_{C/K})$. Determine $\lambda_C \in K$, such that $\lambda_C \cdot \omega$ generates  $\det H^0(\regX,\omega_{\regX/R})$ as an $R$-module.
	\item Explicitly determine a basis for the global sections of $\omega_{\regX/R}$.
\end{enumerate}

Our approach is based on results of \cite{kausz1999discriminant}. Under simplified hypotheses  a formula for $\lambda_C$ and a description of a basis for the global sections of $\omega_{\regX/R}$ is given in Proposition 5.5. of that paper. 

\subsection{Motivation} 
Our motivation for studying the differential forms on the minimal regular model comes from the Birch and Swinnerton-Dyer conjectures. Originally formulated for elliptic curves, the conjectures were later generalised to abelian varieties over number fields by Tate \cite{tate1966bsd}.

 Let $C$ be a hyperelliptic curve defined over the rational numbers $\QQ$ and let $J$ denote its Jacobian. The second Birch and Swinnerton-Dyer conjecture in this situation is

Here $L(J, s)$ is the $L$-series of $J$ and $r$ its analytic rank. Reg denotes the regulator of $J(\QQ)$. For a prime $p$, the Tamagawa number is denoted by $c_{p}$. The Shafarevich-Tate group is represented by $\Sh(J,\QQ)$ and $J(\QQ)_{\mathrm{tors}}$ is the torsion subgroup of $J(\QQ)$. The results of this paper can be applied to calculate the sixth quantity, that is the period $\Omega$. For the description of this quantity we follow the outline in \cite[Section 3]{van2017numerical} and \cite[Section 3.5]{flynn2001empirical}. Both papers provide numerical evidence for the Birch and Swinnerton-Dyer conjectures for hyperelliptic curves.

Let $\nerJ \rightarrow \Spec \ZZ$ denote the N\'eron model of $J$. We have that $H^0(\nerJ,\Omega_{\nerJ/\ZZ})$ is a free $\ZZ$-module of rank $g$. Let $(\mu_0, \dots, \mu_{g-1})$ be a basis for this module. Then $\mu:= \mu_0 \land \dots \land \mu_{g-1}$ is a generator for $\bigwedge^g H^0(\nerJ,\Omega_{\nerJ/\ZZ})$ and $\Omega$ is defined as 
\[\Omega:= \int_{J(\RR)} |\mu|. \]
Finding a basis for $H^0(\nerJ,\Omega_{\nerJ/\ZZ})$ can be done locally. Let $p \in \ZZ$ be a prime. We write $R=\ZZ_p$ and use the notation introduced in the beginning. Since $H^0(\nerJ,\Omega_{\nerJ/R})$ and $H^0(\regX, \omega_{\regX/R})$ are isomorphic as $\ZZ_p$-modules, see \cite[Lemma 9]{van2017numerical}, it is enough to find a basis for the global sections of $\omega_{\regX/R}$.

For computational purposes it is not always necessary to know the basis of regular differentials. In \cite{van2017numerical} and \cite{flynn2001empirical} the authors first evaluate $\int_{J(\RR)} |\omega|$, where $\omega$ is the exterior product of the differentials $\omega_0, \dots, \omega_{g-1}$ associated to the Weierstraß equation for $C$ and then compute a correction term in order to get the value  $\Omega:= \int_{J(\RR)} |\mu|$. 

\subsection{Results}

We give a brief overview of our main results and illustrate them by means of an example. The results are stated in terms of cluster pictures. A cluster picture is a combinatorial object associated to the Weierstraß equation of a curve. It encodes different invariants of the curve. The concept has been studied in \cite{dokchitser2018arithmetic}. For definitions we refer to \cite[Section 1.5.]{dokchitser2018arithmetic} or Section \ref{sec:cluster} of this paper. Here we only recall the necessary notation.
\begin{multicols}{2}
	\begin{itemize}
		\item[$c_f$] leading coefficient of $f$
		\item[$\cR$]  set of roots of $f$ in $K^{sep}$
		\item[$\cs$] a cluster 
		\item[$z_{\cs}$] a centre of $\cs$
		\item[$d_{\cs}$] depth of $\cs$
		\item[$\delta_{\cs}$] relative depth of $\cs$
		\item[$\cs \land \cs'$] smallest cluster containing $\cs$ and $\cs'$
		\item[$\nu_{\cs}$]  $ = v(c_f)+\sum_{r \in \cR} d_{r \land \cs}$
	\end{itemize}
\end{multicols}

First, we show how to read off a basis for $H^0(\regX,\omega_{\regX/R})$ from the cluster picture of a curve.

\begin{thm}[Theorem \ref{thm:mainbasis}]
	\label{thm:mainbasis_int}
	Let $C/K$ be a semistable hyperelliptic curve defined by an integral Weierstraß equation $C:y^2=f(x)$ with cluster picture $\Sigma$.
	Let $\regX/ R$ be the minimal regular model. Assume that the residue field $k$ is algebraically closed. 
	
	Choose clusters $\cs_0, \dots, \cs_{g-1}$  inductively such that
	\[e_i = \frac{\nu_{\cs_i}}{2} - \sum_{j=0}^{i}d_{\cs_j \land \cs_i} = \max \left( \frac{\nu_{\cs}}{2} - \sum_{j=0}^{i-1}d_{\cs_j \land \cs} -d_{\cs}\right),\]
	where the maximum is taken over all proper clusters in $\Sigma$.
	If the maximal value is obtained by two different clusters $\cs$ and $\cs'$ with $\cs' \subset \cs$, choose  $\cs_i = \cs$. 
	
	Then an $R$-basis for the global sections of the relative dualising sheaf $\omega_{\regX/R}$ is given by $(\mu_0, \dots \mu_{g-1})$, where
	\[\mu_i = \pi^{e_i} \prod_{j=0}^{i-1}(x-z_{\cs_{j}}) \frac{dx}{2y}.\]
	
\end{thm}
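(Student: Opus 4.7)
My plan is to translate the basis claim into a local computation of the orders of vanishing of differentials along the irreducible components of the special fiber $\regX_s$. Since $k$ is algebraically closed and $C$ is semistable with $p \neq 2$, the components of $\regX_s$ are (up to a few extra chains of rational curves) indexed by the proper clusters of $\Sigma$; an explicit description follows from \cite{dokchitser2018arithmetic}. For a proper cluster $\cs$, write $\Gamma_\cs$ for the associated component and $v_{\Gamma_\cs}$ for the corresponding valuation on the function field of $\regX$.

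The first step is to prove a valuation formula of the form
\[
v_{\Gamma_\cs}\!\left(P(x)\,\tfrac{dx}{2y}\right) = \sum_j d_{\cs \land \cs_j'} + d_\cs - \tfrac{\nu_\cs}{2}
\]
for any polynomial $P(x) = \prod_j(x - z_{\cs_j'})$ whose roots are centres of clusters $\cs_j'$. The contribution $d_\cs - \nu_\cs/2$ of $\tfrac{dx}{2y}$ itself follows from the explicit description of $\regX$ near $\Gamma_\cs$: after translating $x \mapsto x - z_\cs$ and rescaling so that the roots in $\cs$ reduce to pairwise distinct units, the equation $y^2 = f(x)$ takes a standard form in which $\nu_\cs$ appears as the $\pi$-adic valuation of the leading coefficient. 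This essentially refines Proposition 5.5 of \cite{kausz1999discriminant} to the general semistable setting. Given the formula, integrality of $\mu_i$ reduces to the inequality $v_{\Gamma_\cs}(\mu_i) \geq 0$ for all proper $\cs$, which is exactly the defining maximality property of $e_i$, with equality at $\cs_i$.

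It remains to upgrade integrality to the $R$-basis statement. Since each $\mu_i$ is $\pi^{e_i}$ times a polynomial of degree $i$ in $x$ times $\omega_0$, on the generic fiber $(\mu_0,\dots,\mu_{g-1})$ is already a $K$-basis of $H^0(C,\Omega_{C/K})$, so it suffices to show that their $R$-span equals $H^0(\regX,\omega_{\regX/R})$. I would argue this by reducing modulo $\pi$: because $v_{\Gamma_{\cs_i}}(\mu_i)=0$, the reduction $\bar\mu_i$ is a non-zero section on $\Gamma_{\cs_i}$, and because $v_{\Gamma_{\cs_i}}(\mu_j) > 0$ for $j > i$ by the maximality, the matrix of these reductions is upper-triangular with non-vanishing diagonal entries along the chain $\Gamma_{\cs_0},\Gamma_{\cs_1},\dots$. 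Nakayama's lemma then yields equality.

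I expect the most delicate part to be precisely this last linear-independence argument on the special fiber, and it is where the tie-breaking convention enters crucially: when several $\cs_i$ correspond to the same component $\Gamma_\cs$ of higher arithmetic genus, one must verify that the corresponding $\bar\mu_i$ give distinct regular differentials on that component, which requires a careful bookkeeping of their orders of vanishing at the nodes of $\regX_s$. The rule of choosing the outer cluster $\cs$ over $\cs' \subset \cs$ whenever both realise the maximum is exactly what makes the resulting reductions independent; making this rigorous, perhaps by an induction on the cluster depth that matches the cluster combinatorics to the dual graph of $\regX_s$, is the main obstacle of the proof.
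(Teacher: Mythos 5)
Your first half is sound and matches the paper: the local computation near the generic point of $\Gamma_{\cs}$ in the coordinates $x_{\cs}=(x-z_{\cs})/\pi^{d_{\cs}}$, $y_{\cs}=y/\pi^{\nu_{\cs}/2}$ is exactly the paper's Lemma \ref{lem:e_i}, except that the paper only establishes the \emph{lower bound} $v_{\Gamma_{\cs}}\bigl(\prod_j(x-z_{\cs_j})\tfrac{dx}{2y}\bigr)\geq -\bigl(\tfrac{\nu_{\cs}}{2}-\sum_j d_{\cs_j\land\cs}-d_{\cs}\bigr)$, which is all that integrality of $\mu_i$ requires; your claimed exact equality is stronger and would need extra justification. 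The genuine gap is in your second half. The triangularity claim $v_{\Gamma_{\cs_i}}(\mu_j)>0$ for $j>i$ is simply false whenever the sequence repeats a cluster, i.e.\ $\cs_j=\cs_i$ for some $j>i$ --- and this genuinely happens (the paper notes explicitly that the same cluster can occur multiple times; in Example \ref{ex:basis} one has $\cs_3=\cs_4=\cR$). In that situation $\bar\mu_i$ and $\bar\mu_j$ are both nonzero along the same component, your reduction matrix is not triangular in the claimed sense, and the linear independence of several reductions on one component of positive genus is precisely the step you defer as ``the main obstacle'' without supplying an argument. A proof proposal that leaves its decisive step open is incomplete. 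Your guess about the role of the tie-breaking rule is also misplaced: in the paper it is not used for any special-fibre independence, but to get the strict inequalities in the combinatorial counting Lemma \ref{lem:gamma}, which yields $\gamma(\cs)=\lfloor(|\cs|-1)/2\rfloor$ for every cluster.

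The paper avoids your obstacle entirely by a determinant argument. After Claim 1 (integrality, as above), it computes $\sum_{i=0}^{g-1}e_i$ purely combinatorially using Lemma \ref{lem:gamma} and shows it equals $v(\lambda_C)$ from Theorem \ref{thm:main_lambda}, which was proven independently in Section \ref{sec:lambda} via Kausz's formula and discriminant bookkeeping under M\"obius transformations. Since the transition matrix from $(\omega_0,\dots,\omega_{g-1})$ to $(\mu_0,\dots,\mu_{g-1})$ is triangular with diagonal $\pi^{e_0},\dots,\pi^{e_{g-1}}$, one gets $\mu_0\land\dots\land\mu_{g-1}=\pi^{v(\lambda_C)}\,\omega$, which generates $\det H^0(\regX,\omega_{\regX/R})$; and $g$ integral sections of a free rank-$g$ module whose exterior product generates the determinant automatically form an $R$-basis --- no Nakayama, no reduction mod $\pi$, no analysis of the dual graph. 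To salvage your route you would have to prove both the exact valuation formula and the independence of the repeated reductions $\bar\mu_i$ on a single component (tracking orders at the nodes), which in effect re-derives the counting that Theorem \ref{thm:main_lambda} supplies for free; absent that, you should substitute the paper's Claim 2/Claim 3 mechanism for your final step.
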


Note that the sequence constructed in the theorem is not canonical, since it can happen that the maximum is obtained by two incomparable clusters, in which case any of the two clusters may be chosen. Before we illustrate the construction by an example, we make an observation concerning the assumptions in the theorem.

\begin{rem}
	\label{rem:k_closed}
	In Theorem \ref{thm:mainbasis_int}, it is assumed that the residue field is algebraically closed. This assures that every proper cluster has a $K$-rational centre. However, it should be pointed out that the construction of the minimal regular model commutes with taking the maximal unramified extension. 
	More precisely, let $K$ be a local field with ring of integers $R$, perfect residue field $k$  and $K^{ur}$ the maximal unramified extension with ring of integers $R'$. Let $C$ be a smooth projective curve over $K$ and $\regX$ its minimal regular model. Then $\regX'= \regX \times_{\Spec R} \Spec R'$ is the minimal regular model of $C$ over $R'$. This follows from Lemma 10.3.33.(b) in \cite{liu2002alggeo}.
	
	As a consequence $H^0(\regX',\omega_{\regX'/R'}) = H^0(\regX,\omega_{\regX/R}) \otimes_{R} R'$.
	Moreover, if we find a basis for $H^0(\regX',\omega_{\regX'/R'})$ which is defined over $K$, it is also a basis for $H^0(\regX,\omega_{\regX/R})$. 
	Translated to the situation of the theorem, this means that we can replace the assumption $k$ algebraically closed with the condition that every proper cluster admits a $K$-rational centre. 	
	
	 Note that for the same reason, the residue field $k$ does not need to be algebraically closed in the statement of  Theorem \ref{thm:main_lambda_int}.
\end{rem}

\begin{ex}
	\label{ex:basis}
	Let $p>3$ and $C/\QQ_p$ the hyperelliptic curve of genus $g = 5$ defined by
	\[y^2 = x(x-p^6)(x-2p^6)(x-p^4)(x-2p^4)(x-3p^4)(x-1)(x-1-p^8)(x-1-2p^8)(x-3p^8)(x-2)(x-3).\] The proper clusters are
	\[ \cR,\; \ct_1 = \{0,p^6, 2p^6, p^4,2p^4, 3p^4\},\; \ct_2 = \{0, p^6, 2p^6\},\; \ct_3 = \{1, 1+p^8, 1+2p^8,1+3p^8\}. \]
	These clusters have depths $d_\cR = 0, \; d_{\ct_1} = 4, \; d_{\ct_2} = 6$, $d_{\ct_3} = 8$ and relative depths $\delta_{\ct_1} = 4, \; \delta_{\ct_2} = 2$, $\delta_{\ct_3} = 8$. This information is contained in the cluster picture $\Sigma$:
	\begin{center}
		\clusterpicture
		\Root(0.00,2)A1(a1);     
		\Root(0.40,2)A1(a2);     
		\Root(.80,2)A1(a3);      
		\Root(1.4,2)A1(b1);
		\Root(1.8,2)A1(b2);
		\Root(2.2,2)A1(b3);
		\Root(2.8,2)A1(c1);
		\Root(3.2,2)A1(c2);
		\Root(3.6,2)A1(c3);
		\Root(4.0,2)A1(c4);
		\Root(4.6,2)A1(d1);
		\Root(5.0,2)A1(d2);
		\ClusterL(C3){(a1)(a2)(a3)}{$\mathbf{2}$};   
		\ClusterL(C2){(C3)(C3n)(b1)(b2)(b3)}{$\mathbf{4}$};   
		\ClusterL(C4){(c1)(c2)(c3)(c4)}{$\mathbf{8}$};
		\ClusterL(C1){(C2)(C2n)(C4)(C4n)(d1)(d2)}{$\mathbf{0}$};
		\endclusterpicture
	\end{center}
	The subscript of the top cluster is its depth. The subscripts of the other clusters are their relative depths. 
	
	We construct a sequence of clusters as described in Theorem \ref{thm:mainbasis_int}. The theorem is applicable since every cluster has a centre in $\QQ_p$ (see the previous remark).
	First we choose $\cs_0$ to be the cluster that maximises $\frac{\nu_{\ct}}{2}-d_{\ct}$. The evaluation of this term for each cluster can be found in the first column (after the double line) of the table below. Next we choose $\cs_1$ to be the cluster that maximises $\frac{\nu_{\ct}}{2}-d_{\ct}- d_{\ct\land\cs_0}$, the cluster $\cs_2$ is the one that maximises $\frac{\nu_{\ct}}{2}-d_{\ct}- d_{\ct\land\cs_0}- d_{\ct \land\cs_1}$ and so on. 
	
	\begin{center}
		\begin{tabular}{ l | c | c || c | c | c | c | c }
			&$\nu_{\ct}$&$d_{\ct}$&$\frac{\nu_{\ct}}{2}-d_{\ct}$&$\frac{\nu_{\ct}}{2}-d_{\ct} - d_{\ct \land \ct_2}$&\dots $- d_{\ct \land \ct_3}$&\dots$-d_{\ct \land \ct_1}$&$\dots - d_{\ct \land \cR}$\\ \hline
			$\cR$   & 0  & 0 & 0  & 0 & 0 & \numcirclemodi{0}&\numcirclemodi{0}\\ \hline
			$\ct_1$ & 24 & 4 & 8 & 4& \numcirclemodi{4}& 0 &0\\ \hline
			$\ct_2$ & 30 & 6 & \numcirclemodi{9} & 3&3& -1 &-1\\ \hline
			$\ct_3$ & 32 & 8 & 8 &  \numcirclemodi{8}&0& 0 &0\\
		\end{tabular}
	\end{center}
	In each column the maximal value is circled to indicate which cluster is chosen in the respective step. Three dots always represent the entire expression in the previous column. We can read off from the table
	
	\begin{center}
		\bgroup
		\def\arraystretch{1.5}
		\begin{tabular}{ c  c  c  c  c c }	
			&$\cs_0 = \ct_2$,  & $\cs_1 = \ct_3$, & $\cs_2 = \ct_1$,&$\cs_3 = \cR$,&$\cs_4 = \cR$, \\
			and &$e_0 = 9,$ &$e_1 = 8$, & $e_2 = 4$, &$e_3 = 0$, &$e_4 = 0$.\\
		\end{tabular}
		\egroup
	\end{center}
	If we choose $z_{\cR} = z_{\ct_1} = z_{\ct_2} = 0 \textrm{ and } z_{\ct_3} = 1$ as centres for the clusters, 
	we get the following basis for the global sections of $\omega_{\regX/R}$:
	\[ \left( \mu_0 = p^9 \,\frac{dx}{2y}, \;\mu_1 = p^8 x\,\frac{dx}{2y}, \; \mu_2 = p^4 x(x-1)\, \frac{dx}{2y}, \; \mu_3 = x^2(x-1) \,\frac{dx}{2y}, \; \mu_4 = x^3(x-1)\, \frac{dx}{2y} \right)\] 
\end{ex}

As mentioned before, it is often not necessary to determine the basis for $H^0(\regX,\omega_{\regX/R})$ explicitly, but it suffices to know a generator for $\det H^0(\regX,\omega_{\regX/R})$. Theorem \ref{thm:main_lambda_int} gives a convenient formula for determining this generator.

\begin{thm}[Theorem \ref{thm:main_lambda}]
	\label{thm:main_lambda_int}
	Let $C/K$ be a semistable hyperelliptic curve defined by $C: y^2=f(x)$ with $f(x) = c_f\prod_{r\in \cR}(x-r)$. We write $\omega_0 = \frac{dx}{2y}, \dots, \omega_{g-1} = \frac{x^{g-1}\,dx}{2y}$ for the differentials associated to this equation and $\omega := \omega_0 \land \dots \land \omega_{g-1} \in  \det H^0(C,\Omega^1_{C/K})$. Let $\regX \rightarrow \Spec R$ be the minimal regular model of $C$. Suppose that $\lambda_C \cdot \omega$ is a basis for $\det H^0(\regX,\omega_{\regX/R})$. Then  
	\begin{align*}
	\begin{split}
	8 \, v(\lambda_C) =  &\; 4\, g \cdot v(c_f) + \sum_{\substack{|\cs| \textrm{ even}\\ \cs \neq \cR}} \delta_{\cs}  (|\cs|-2)|\cs| + \sum_{\substack{|\cs| \textrm { odd}\\ \cs \neq \cR}} \delta_{\cs} (|\cs|-1)^2\\
	&~~~~+ d_{\cR} 
	\begin{cases} 
	(|\cR|-2)|\cR|, & \text{if $|\cR| = 2g+2$}\\
	(|\cR|-1)^2, & \text{if $|\cR| = 2g+1$}
	\end{cases}.
	\end{split}
	\end{align*} 	
\end{thm}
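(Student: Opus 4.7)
The plan is to use the explicit basis from Theorem~\ref{thm:mainbasis_int} to reduce the computation of $v(\lambda_C)$ to a combinatorial identity on the cluster picture.

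First, by Theorem~\ref{thm:mainbasis_int} (working over $K^{ur}$ if necessary, as explained in Remark~\ref{rem:k_closed}, which affects neither the valuation nor the statement), an $R$-basis of $H^0(\regX,\omega_{\regX/R})$ is given by $\mu_i = \pi^{e_i}\prod_{j=0}^{i-1}(x-z_{\cs_j})\tfrac{dx}{2y}$. Since $\prod_{j=0}^{i-1}(x-z_{\cs_j})$ is monic of degree $i$ in $x$, the change-of-basis matrix from $(\omega_0,\dots,\omega_{g-1})$ to $(\mu_0,\dots,\mu_{g-1})$ is upper triangular with diagonal $(\pi^{e_0},\dots,\pi^{e_{g-1}})$. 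Taking the wedge product gives $\mu_0\wedge\dots\wedge\mu_{g-1}=\pm\,\pi^{\sum_i e_i}\,\omega$, and therefore $v(\lambda_C)=\sum_{i=0}^{g-1}e_i$; the problem is reduced to evaluating this sum.

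Next, I would decompose each depth appearing in $e_i$ via the identity $d_\ct=\sum_{\cs\supseteq\ct}\delta_\cs$ (with the convention $\delta_\cR=d_\cR$). Applying this to $\nu_{\cs_i}=v(c_f)+\sum_{r\in\cR}d_{r\wedge\cs_i}$ gives $\nu_{\cs_i}=v(c_f)+\sum_{\cs\supseteq\cs_i}\delta_\cs\,|\cs|$, while applying it to $\sum_{j\leq i}d_{\cs_j\wedge\cs_i}$ and counting, for each $\cs$, the pairs $(i,j)$ with $0\leq j\leq i$ and $\cs_i,\cs_j\subseteq\cs$ yields $\sum_\cs \delta_\cs\cdot\tfrac{n_\cs(n_\cs+1)}{2}$, where $n_\cs:=|\{i:\cs_i\subseteq\cs\}|$. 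Combining these gives the identity
\[
2\sum_{i=0}^{g-1} e_i \;=\; g\cdot v(c_f)\;+\;\sum_{\cs}\delta_\cs\cdot n_\cs\bigl(|\cs|-n_\cs-1\bigr).
\]
Multiplying by $4$ brings the left-hand side into agreement with $8\,v(\lambda_C)$, and it remains only to match $4n_\cs(|\cs|-n_\cs-1)$ against the weights $(|\cs|-2)|\cs|$ or $(|\cs|-1)^2$ in the theorem.

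The crucial combinatorial step is to show that the greedy construction in Theorem~\ref{thm:mainbasis_int} produces $n_\cs=(|\cs|-1)/2$ for every proper cluster of odd size, and $n_\cs\in\{|\cs|/2-1,\,|\cs|/2\}$ for every proper cluster of even size, with $n_\cR=g$. A short check then confirms that in either parity one has $4n_\cs(|\cs|-n_\cs-1)=(|\cs|-1)^2$ or $(|\cs|-2)|\cs|$ as needed; the top cluster contribution $g(|\cR|-g-1)$ similarly reproduces the distinguished $d_\cR$-term in both cases $|\cR|=2g+1,\,2g+2$.

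The main obstacle is this last step: showing that the greedy choice pins down $n_\cs$ to the optimal parity-dependent value. I would argue by induction on the cluster tree. The inductive observation is that once the algorithm has settled all ancestors of $\cs$, selecting any further sub-cluster of $\cs$ (or $\cs$ itself) decreases the maximisation criterion $\tfrac{\nu_\cs}{2}-\sum_{j}d_{\cs_j\wedge\cs}$ by exactly $\delta_\cs$ per step; hence the greedy rule stops picking inside $\cs$ precisely when the remaining value of this criterion falls to the level of the neighbouring branches. Tracking this balance against the number $|\cs|$ of available roots, with an extra bookkeeping for even sub-clusters (which yields the freedom $\{|\cs|/2-1,|\cs|/2\}$), gives the stated value of $n_\cs$ and completes the proof.
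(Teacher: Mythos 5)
Your argument is circular as written. The first step invokes Theorem~\ref{thm:mainbasis} to obtain the basis $(\mu_0,\dots,\mu_{g-1})$, but in the paper that theorem is itself deduced \emph{from} Theorem~\ref{thm:main_lambda}: Lemma~\ref{lem:e_i} only gives a lower bound on the orders of vanishing, i.e.\ it shows that the $\mu_i$ are global sections of $\omega_{\regX/R}$, so that the lattice they span is \emph{contained} in $H^0(\regX,\omega_{\regX/R})$. That this containment is an equality is proved precisely by computing $\mu_0\land\dots\land\mu_{g-1}=\pi^{\sum_i e_i}\,\omega$ and comparing $\sum_i e_i$ with $v(\lambda_C)$ via Theorem~\ref{thm:main_lambda} (Claims~2 and~3 in the paper's proof of Theorem~\ref{thm:mainbasis}). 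Your closing paragraph only sketches the combinatorial statement $n_{\cs}=\lfloor(|\cs|-1)/2\rfloor$, which is Lemma~\ref{lem:gamma} and is indeed proved in the paper without reference to Theorem~\ref{thm:main_lambda}; but it does not address the genuinely hard point, namely that the $\mu_i$ \emph{generate} $H^0(\regX,\omega_{\regX/R})$ as an $R$-module. Without an independent proof of that spanning property (e.g.\ matching upper bounds on orders of vanishing along the components $\Gamma_{\cs}$, converse to Lemma~\ref{lem:e_i}, showing that every global section lies in the lattice spanned by the $\mu_i$), you are assuming a statement logically downstream of the one you are proving.

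The parts you do carry out are correct and coincide with the paper's internal computation: the triangular change-of-basis argument giving $v(\lambda_C)=\sum_i e_i$ (granting the basis), and the identity $2\sum_i e_i=g\,v(c_f)+\sum_{\cs}\delta_{\cs}\,n_{\cs}(|\cs|-n_{\cs}-1)$ with $\delta_{\cR}=d_{\cR}$ and $n_{\cR}=g$, are exactly Claim~2 of the proof of Theorem~\ref{thm:mainbasis}, including the parity check (note that for even $|\cs|$ your two admissible values $n_{\cs}\in\{|\cs|/2-1,\,|\cs|/2\}$ give the same weight because $n(|\cs|-n-1)$ is symmetric about $(|\cs|-1)/2$, though Lemma~\ref{lem:gamma} in fact pins down $n_{\cs}=|\cs|/2-1$). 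The paper proves Theorem~\ref{thm:main_lambda} by an entirely different, non-circular route: it reduces, via tame base change (Lemma~\ref{lem:tame}, resting on Proposition~\ref{prop:H0basechange}) and M\"obius transformations (Proposition~\ref{prop:manipulateclusters}), to the special case covered by Kausz (Lemma~\ref{lem:kausz}), tracking the effect of each transformation on $v(\lambda_C)$ through the discriminant by means of the hyperelliptic discriminant invariant (Proposition~\ref{fct:disc} and Lemma~\ref{lem:manipulatesigma}). To salvage your route you would have to prove Theorem~\ref{thm:mainbasis} first and independently --- essentially redoing what Kausz does under his restrictive hypotheses, but in the general semistable setting --- and that missing argument is the substance of the theorem.
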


Let us revisit the above example.

\begin{ex}
	\label{ex:lambda}
	Let $p>3$ and $C/\QQ_p^{ur}$ be the hyperelliptic curve from Example \ref{ex:basis}. Recall the cluster picture $\Sigma$:
	\begin{center}
		\clusterpicture
		\Root(0.00,2)A1(a1);     
		\Root(0.40,2)A1(a2);     
		\Root(.80,2)A1(a3);      
		\Root(1.4,2)A1(b1);
		\Root(1.8,2)A1(b2);
		\Root(2.2,2)A1(b3);
		\Root(2.8,2)A1(c1);
		\Root(3.2,2)A1(c2);
		\Root(3.6,2)A1(c3);
		\Root(4.0,2)A1(c4);
		\Root(4.6,2)A1(d1);
		\Root(5.0,2)A1(d2);
		\ClusterL(C3){(a1)(a2)(a3)}{$\mathbf{2}$};   
		\ClusterL(C2){(C3)(C3n)(b1)(b2)(b3)}{$\mathbf{4}$};   
		\ClusterL(C4){(c1)(c2)(c3)(c4)}{$\mathbf{8}$};
		\ClusterL(C1){(C2)(C2n)(C4)(C4n)(d1)(d2)}{$\mathbf{0}$};
		\endclusterpicture
	\end{center}
	Then the formula of Theorem \ref{thm:main_lambda_int} yields
	\begin{align*}
	8 \, v(\lambda_C) &=   4 \cdot g \cdot v(c_f) &+\;& \delta_{\ct_1}(|\ct_1|-2)|\ct| &&+\; \delta_{\ct_2}(|\ct_2|-1)^2 &+\;& \delta_{\ct_3}(|\ct_3|-2) |\ct_3| &+\;& d_{\cR}\, (|\cR|-2)|\cR|\\
	&= 4 \cdot 5 \cdot 0 &+\;& 4 \cdot 4 \cdot 6 &&+\; 2 \cdot 2^2 &+\;& 8 \cdot 2 \cdot 4 &+\;& 0 \cdot 10 \cdot 12\\
	&= 8 \cdot 21. &&&&&&&&
	\end{align*}
	So \[\mu: = p^{21} \, \frac{dx}{2y} \land \frac{xdx}{2y}\land \frac{x^2dx}{2y}\land \frac{x^3dx}{2y}  \land \frac{x^4dx}{2y} \]
	generates $\det H^0(\regX,\omega_{\regX/R})$. Note that the value $v(\lambda_C) = 21$ is equal to the sum over the $e_i$ determined by Theorem \ref{thm:mainbasis_int}.
\end{ex}

\subsection{Outline}
In Section \ref{sec:cluster}, we review some definitions and facts about cluster pictures. In Section \ref{sec:lambda} we translate Proposition 5.5. of \cite{kausz1999discriminant} into the language of cluster pictures and generalise it in order to prove Theorem \ref{thm:main_lambda_int}. The last section is dedicated to the proof of Theorem \ref{thm:mainbasis_int}. This is done by first showing that the differentials forms defined in the theorem are indeed global sections of the canonical sheaf and then applying Theorem \ref{thm:main_lambda_int}.

\subsection*{Acknowledgements}
I would like to thank Vladimir Dokchitser for proposing to work on this topic and his support throughout the creation of this paper. I would also like to thank Stefan Wewers for very helpful discussions and comments on earlier versions of this paper, as well as Adam Morgan who also suggested a proof of Proposition 3.3.

\section{Cluster Pictures}
\label{sec:cluster}
In this section, we describe the cluster picture associated to an equation defining a hyperelliptic curve and briefly introduce the notation used in the subsequent sections. All information is taken from \cite{dokchitser2018arithmetic}.

Let $C/K$ be a hyperelliptic curve defined by a Weierstraß equation $C:y^2 = f(x)$. We write $\cR$ for the set of roots of $f(x)$ in $K^{sep}$ and $c_f$ for its leading coefficient, so that
\[f(x)= c_f\prod_{r\in \cR}(x-r).\] 

\begin{defs}
	\begin{enumerate}[(i)]
		\item (\cite{dokchitser2018arithmetic} Definition 1.1.) A \textit{cluster} is a non-empty subset $\cs \subset \cR$ of the form $\cs = D\cap \cR$ for some disc $D = \{x \in \bar{K} \,|\,v(x-z) \geq d\}$ for some $d\in \QQ$ and $z\in \bar{K}$. We say that $z$ is a \textit{centre} of the cluster and write $z = z_{\cs}$.
		\item 	(\cite{dokchitser2018arithmetic} Definition 1.1.) If $|\cs|>1$, then $\cs$ is called a \textit{proper} cluster and its \textit{depth} is defined to be $d_{\cs} = \min_{r,r' \in \cs} v(r-r')$.
		\item (\cite{dokchitser2018arithmetic} Definition 1.3.) If $\cs' \subsetneq \cs$ is a maximal subcluster, we write $\cs' < \cs$ and say that $\cs'$ is a child of $\cs$.
		For two clusters (possibly roots) $\cs,\; \cs'$ we write $\cs \land \cs'$ for the smallest cluster that contains them.
		\item (\cite{dokchitser2018arithmetic} Definition 1.4.) A cluster $\cs$ is \textit{principal} if $|\cs| \geq 3$,
		except if either $\cs = \cR$ is even and has exactly two children, or if $\cs$ has a child of size $2g$.
		\item (\cite{dokchitser2018arithmetic} Definition 1.5.) If $\cs \neq \cR$ is proper, the \textit{relative depth} of  $\cs$ is defined as the difference between the depth of $\cs$ and the depth of the smallest cluster strictly containing $\cs$.  It is denoted by $\delta_{\cs}$.	
		\item (\cite{dokchitser2018arithmetic} Definition 1.6.) For a proper cluster $\cs$, set $\nu_{\cs} = v(c_f)+\sum_{r \in \cR} d_{r \land \cs}$.
	\end{enumerate} 
\end{defs}

By the \textit{cluster picture} associated to an equation, we mean the collection of clusters $\{\cs \subset \cR\}$ together with their depths. See Example \ref{ex:basis} in the introduction for an illustration of these definitions.

\begin{rem}
	\label{rem:compute_vs}
	Note that for a root $r \in \cR$ and a cluster $\cs$, we have \[d_{r \land \cs} = d_{\cR} + \sum_{\substack{\cs' \neq \cR:\\ \cs \land r \subseteq \cs'  }} \delta_{\cs'}.\] So $\nu_{\cs}$ can also be calculated via \[\nu_{\cs} = v(c_f)+d_{\cR}|\cR| +  \sum_{\substack{\cs' \neq \cR:\\ \cs \subseteq \cs'  }} \delta_{\cs'} |\cs'|. \]
\end{rem}

There is a notion of equivalence for cluster pictures that respects isomorphisms of hyperelliptic curves. For a complete discussion of the topic we refer to  \cite[Section 14]{dokchitser2018arithmetic}. The following proposition is important for the proof of Theorem \ref{thm:main_lambda}. So we state it here for the convenience of the reader.

\begin{prop}[\cite{dokchitser2018arithmetic}, Proposition 14.6.]
	\label{prop:manipulateclusters}
	Let $f(x) \in K[x]$ be a separable polynomial with roots $\cR \subset K^{sep}$, such
	that the absolute Galois group $G_K = G(K^{sep}/K)$ acts tamely on $\cR$, and let $\Sigma$ be the associated cluster picture. Suppose $\Sigma'$ is a cluster picture obtained from $\Sigma$ by one of the following constructions:
	\begin{enumerate}
		\item Increasing the depth of all clusters by some $n\in \ZZ$;
		\item Adding a root to $\cR$, provided $|\cR|$ is odd, $d_{\cR} \in \ZZ$ and $|k|> \#\{\cs < \cR : \cs \textrm{ is } G_K-\textrm{stable}\}$;
		\item Redistributing the depth between $\cs$ and $\cR \backslash \cs$ by decreasing the depth of $\cs$ by $1$ and increasing the depth of $\cR \backslash \cs$ by $1$,
		provided $|\cR|$ is even, $\cs < \cR$ is $G_K$-stable with $d_{\cR}, \,d_{\cs} \in \ZZ$ and $|k| > \#\{\ct < \cs : \ct \textrm{ is } G_K- \textrm{stable} \}$.
		\footnote{If $\cR \backslash \cs$ is not contained in $\Sigma$, it is added to the cluster picture as a new cluster with relative depth $\delta_{\cR\backslash\cs} =0$. In accordance with \cite[Definition 14.1.]{dokchitser2018arithmetic} redistributing the depth between $\cs$ and $\cR \backslash \cs$ includes decreasing (respectively increasing) the depth of all proper clusters contained in $\cs$ (respectively $\cR \backslash \cs$).}
	\end{enumerate}
	Then there is a Möbius transformation $\phi(z) = \frac{az+b}{cz+d}$ with $a, b, c, d \in K$, such that $\Sigma'$	is the cluster picture of $\cR' = \{\phi(r): r \in \cR \} \backslash \{\infty\}$ if $|\cR|$ is even and of $\cR' = \{\phi(r): r \in \cR \cup \{\infty\}\}\backslash \{\infty\}$ if $|\cR|$ is odd.	Moreover, if $y^2 = f(x)$ is a hyperelliptic curve, then there is a $K$-isomorphic curve given by a Weierstraß model whose cluster picture is $\Sigma'$.
\end{prop}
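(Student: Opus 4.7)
I would prove the proposition case by case, producing in each case an explicit Möbius transformation $\phi(z) = (az+b)/(cz+d)$ with $a,b,c,d \in K$ and verifying by direct valuation computation that $\{\phi(r) : r \in \cR\}$ has the claimed cluster picture $\Sigma'$. Everything is controlled by the identity
\[
\phi(r) - \phi(r') = \frac{(ad-bc)(r-r')}{(cr+d)(cr'+d)},
\]
which expresses $v(\phi(r) - \phi(r'))$ in terms of $v(r-r')$ and the valuations $v(cr+d)$. Translating a given $\phi$ back to a Weierstraß model is the standard substitution $x = \phi^{-1}(x')$ in $y^2 = f(x)$ followed by rescaling $y$ by an appropriate power of the denominator; this realises the change of coordinates as a $K$-isomorphism of hyperelliptic curves, handling the odd-degree case by interpreting $\infty$ as a branch point and tracking it separately.

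Case (1) is immediate with $\phi(z) = \pi^n z$, which adds $n$ to every $v(r - r')$ and hence $n$ to every depth; no residue-field hypothesis is needed. For case (2), the odd-degree polynomial $f$ has a Weierstraß point at $\infty$ and the task is to move it to a $K$-rational $z_0$ lying in the disc of $\cR$ but outside every proper child. After the normalisation $z \mapsto \pi^{-d_{\cR}}(z - z_{\cR})$ the forbidden discs become residue classes in $k$; since Galois-conjugate children carry Galois-conjugate centres, only $G_K$-stable children contribute $K$-rational forbidden residues, so the hypothesis $|k| > \#\{\cs < \cR : G_K\text{-stable}\}$ yields a valid $z_0$. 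The transformation $\phi(z) = z_0 + 1/(z - z_{\cR})$ then sends $\infty$ to $z_0$, and a direct application of the displayed identity shows that the combinatorial structure of $\Sigma$ is preserved while a single extra root is added in the top cluster.

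The hard part is case (3), where one must construct $\phi$ that simultaneously dilates distances inside $\cs$ by a factor of $\pi^{-1}$, contracts distances inside $\cR \setminus \cs$ by a factor of $\pi$, and leaves the cross-depth $d_{\cR}$ invariant. The natural candidate is a Möbius transformation of shape $\phi(z) = A(z - z_{\cs})/(z - z_0)$ for a $K$-rational centre $z_{\cs}$ of $\cs$, a scaling factor $A \in K^\times$ of controlled valuation, and an auxiliary $K$-point $z_0$ satisfying $v(z_0 - z_{\cs}) = d_{\cR}$ and lying outside every $G_K$-stable child of $\cs$; the assumption $|k| > \#\{\ct < \cs : G_K\text{-stable}\}$ is exactly what produces such a $z_0$. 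The remaining verification, that each proper cluster $\ct \subseteq \cs$ (resp.\ $\ct \subseteq \cR \setminus \cs$) maps to a cluster of depth $d_{\ct} - 1$ (resp.\ $d_{\ct} + 1$), and that $\cR \setminus \cs$ becomes a new cluster of relative depth zero when required, is a mechanical ultrametric bookkeeping from the displayed identity. The main obstacle lies in calibrating $A$ and $z_0$ simultaneously across all subclusters so that the cross-depth is preserved while the interior depths shift by exactly one; the residue-field hypotheses are the essential input that makes such a $K$-rational $z_0$ available.
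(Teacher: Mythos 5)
First, a point of comparison: the paper does not prove this proposition at all --- it is quoted verbatim from \cite[Proposition 14.6]{dokchitser2018arithmetic} ``for the convenience of the reader'', so your attempt can only be measured against the argument in that reference. Your overall strategy does mirror it: realise each manipulation by an explicit M\"obius map, control valuations through the identity $\phi(r)-\phi(r')=(ad-bc)(r-r')/\bigl((cr+d)(cr'+d)\bigr)$, and extract the auxiliary $K$-rational point from the counting hypotheses via the (correct) observation that a $K$-rational point lying in a child's disc forces that child to be $G_K$-stable. Case (1) is fine as you state it.

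However, both explicit maps you write down are miscalibrated, and the ``mechanical ultrametric bookkeeping'' you defer to would expose this rather than complete the proof. In case (2), cluster pictures are translation-invariant, so the additive parameter $z_0$ in $\phi(z)=z_0+1/(z-z_\cR)$ is irrelevant to the resulting picture: you have attached the residue-avoidance property to a point that does nothing, while the point that matters --- the pole --- is placed at a mere centre $z_\cR$ of $\cR$. A centre may itself be a root of $f$ (then $\phi$ sends a root to $\infty$ and no root is added), and in general lies inside some child, in which case $v(r-z_\cR)>d_\cR$ for the roots of that child and the identity shifts depths non-uniformly: inversion about such a point reflects the cluster tree. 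Concretely, for $\cR=\{0,\pi,1\}$ with $z_\cR=\pi^2$, the images together with $\phi(\infty)$ have pairwise valuations giving a chain of clusters of sizes $4\supset 3\supset 2$, not $\Sigma$ with one extra root. The repair is exactly to make the pole the avoiding point: with $v(r-z_0)=d_\cR$ for \emph{all} $r\in\cR$ (this is what the hypothesis on $|k|$ buys) and $\phi(z)=c/(z-z_0)$, $v(c)=2d_\cR$, all internal distances are preserved and $\phi(\infty)$ lands at distance exactly $d_\cR$ from every image root. In case (3) you do put the avoidance condition on the pole, but at the wrong depth: $v(z_0-z_\cs)=d_\cR$ forces $v(r-z_0)=d_\cR$ for every root $r$, inside or outside $\cs$, so the identity yields a \emph{uniform} shift of all depths --- a rescaling, with no redistribution at all. (A symptom: at that distance $z_0$ meets no child of $\cs$, so the hypothesis $|k|>\#\{\ct<\cs \text{ stable}\}$ you invoke would never be used.) One needs the pole strictly between $\cR$ and $\cs$, namely $v(z_0-r)=d_\cR+1$ for all $r\in\cs$ (hence $=d_\cR$ for $r\notin\cs$); avoiding the discs $\{x\,:\,v(x-r)\ge d_\cR+2\}$, $r \in \cs$ --- which when $\delta_\cs=1$ are exactly the children of $\cs$ --- is where that hypothesis genuinely enters, and taking $\phi(z)=A/(z-z_0)$ with $v(A)=2d_\cR+1$ then lowers depths inside $\cs$ by one, raises those inside $\cR\setminus\cs$ by one, and fixes the cross-depth $d_\cR$. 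Finally, you nowhere use the tameness hypothesis; it is needed to guarantee $K$-rational centres for the $G_K$-stable clusters about which all of these constructions are normalised.
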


\section{A Basis for $\det H^0(\regX,\omega_{\regX/R})$}
\label{sec:lambda}

Let $C/K$ be a semistable hyperelliptic curve of genus $g$ defined by $C: y^2=f(x)$ with $f(x) = c_f\prod_{r\in \cR}(x-r)$. We write $\omega_0 = \frac{dx}{2y}, \dots, \omega_{g-1} = \frac{x^{g-1}\,dx}{2y}$ for the differentials associated to this equation and $\omega := \omega_0 \land \dots \land \omega_{g-1} \in  \det H^0(C,\Omega^1_{C/K})$. Let $\regX \rightarrow \Spec R$ be the minimal regular model of $C$. The main result of this section is Theorem \ref{thm:main_lambda}, where we determine $\lambda_C \in K$, such that $\lambda_C \cdot \omega$ generates  $\det H^0(\regX,\omega_{\regX/R})$ as an $R$-module. Note that $\lambda_C$ is only well-defined up to a unit. Moreover it is not a curve invariant, but depends on the equation.

\begin{thm}
	\label{thm:main_lambda}
	Let $C/K$ be a semistable hyperelliptic curve of genus $g$ defined by $C: y^2=f(x)$ with $f(x) = c_f\prod_{r\in \cR}(x-r)$. We write $\omega_0 = \frac{dx}{2y}, \dots, \omega_{g-1} = \frac{x^{g-1}\,dx}{2y}$ for the differentials associated to this equation and $\omega := \omega_0 \land \dots \land \omega_{g-1} \in  \det H^0(C,\Omega^1_{C/K})$. Let $\regX \rightarrow \Spec R$ be the minimal regular model of $C$. Suppose that $\lambda_C \cdot \omega$ is a basis for $\det H^0(\regX,\omega_{\regX/R})$. Then  
	\begin{align}
	\label{eqn:lambda}
	\begin{split}
	8 \, v(\lambda_C) =  &\; 4\, g \cdot v(c_f) + \sum_{\substack{|\cs| \textrm{ even}\\ \cs \neq \cR}} \delta_{\cs}  (|\cs|-2)|\cs| + \sum_{\substack{|\cs| \textrm { odd}\\ \cs \neq \cR}} \delta_{\cs} (|\cs|-1)^2\\
	&~~~~+ d_{\cR} 
	\begin{cases} 
	(|\cR|-2)|\cR|, & \text{if $|\cR| = 2g+2$}\\
	(|\cR|-1)^2, & \text{if $|\cR| = 2g+1$}
	\end{cases}.
	\end{split}
	\end{align} 	
\end{thm}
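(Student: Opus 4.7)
The plan is to establish \eqref{eqn:lambda} in two stages. First, I would prove it in a restricted base case by a direct translation of Proposition 5.5 of \cite{kausz1999discriminant} into the language of cluster pictures. Kausz's ``simplified hypotheses'' essentially amount to working with a standard Weierstra{\ss} model whose roots are all integral and whose cluster picture has $d_{\cR}=0$ and every proper cluster defined over $K$; his formula for a generator of $\det H^0(\regX, \omega_{\regX/R})$ is expressed in terms of invariants of the special fibre of $\regX$ (component multiplicities and horizontal divisor data). Using the explicit correspondence between principal clusters and components of the special fibre of $\regX$ developed in \cite{dokchitser2018arithmetic}, these invariants translate directly into the combinatorial quantities $|\cs|$, $d_{\cs}$, and $\delta_{\cs}$; each principal subcluster $\cs\neq \cR$ should then contribute either $\delta_{\cs}|\cs|(|\cs|-2)$ or $\delta_{\cs}(|\cs|-1)^2$ to $8\,v(\lambda_C)$ according to the parity of $|\cs|$, with the top cluster producing the $d_{\cR}$ boundary term.

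To extend to arbitrary semistable cluster pictures, the next step is to show that both sides of \eqref{eqn:lambda} transform compatibly under each of the three elementary operations of Proposition \ref{prop:manipulateclusters}. Any semistable cluster picture can be brought to the base-case shape by a finite sequence of these operations, so compatibility under each one suffices. Operations (i) (uniform depth shift) and (iii) (depth redistribution) correspond to explicit M\"obius transformations $\phi(x)=\tfrac{ax+b}{cx+d}$; under such a change of variable the ordered basis $(\omega_0,\dots,\omega_{g-1})$ transforms by a matrix whose determinant is computable in closed form, so $v(\lambda_C)$ changes by a controlled amount which can be compared against the change of the right-hand side. Operation (ii) (adding a root ``at infinity'') changes the Weierstra{\ss} model without altering the isomorphism class of $C$, and both sides of the identity match after a short calculation involving the new leading coefficient $c_f'$ and the trivial added root.

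The main obstacle will be the bookkeeping in operation (iii). Under redistribution of depth between a cluster $\cs$ and its complement $\cR\setminus\cs$, the corresponding M\"obius transformation simultaneously changes $c_f$, all depths inside $\cs$ and inside $\cR\setminus\cs$, and each of the differentials $\omega_i$; the required compatibility then reduces to a polynomial identity in $|\cs|$, $|\cR|$, and $g$, which has to be split into subcases by the parities of $|\cs|$ and $|\cR|$ and according to whether $\cR\setminus\cs$ was already present in $\Sigma$. Particular care is needed with the asymmetric boundary term in $d_{\cR}$ (which has no analogue for interior clusters) and with the ``new'' cluster $\cR\setminus\cs$ whose relative depth jumps from $0$ to the value dictated by the redistribution. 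Once this parity analysis is dispatched, induction on the number of proper clusters will reduce an arbitrary $\Sigma$ to the base case handled by the translation of Kausz, completing the proof.
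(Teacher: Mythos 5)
You follow the paper's architecture almost exactly: Lemma \ref{lem:kausz} of the paper is precisely the translation of Kausz's Proposition 5.5.2 into cluster pictures, and the paper likewise propagates the identity through the operations of Proposition \ref{prop:manipulateclusters} (its Lemma \ref{lem:manipulatesigma}). Your one methodological variation --- tracking the change of $v(\lambda_C)$ via the determinant of the transition matrix on $(\omega_0,\dots,\omega_{g-1})$ under the M\"obius substitution --- is viable in principle, but the paper sidesteps exactly the bookkeeping you flag as the main obstacle: it introduces the curve invariant $\Lambda=\Delta^g\cdot\omega^{\otimes 8g+4}$ (Definition \ref{def:hyperellipticDiscriminant}, Proposition \ref{fct:disc}), so that changes of $v(\lambda_C)$ are proportional to changes of $v(\Delta)$, and the latter are already tabulated in \cite[Lemma 16.6]{dokchitser2018arithmetic}. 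Either device works; the discriminant route is shorter because the parity case analysis then enters only once, in checking that the right-hand side of \eqref{eqn:lambda} is stable under redistribution.

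The genuine gap is your claim that ``any semistable cluster picture can be brought to the base-case shape by a finite sequence of these operations.'' Over $K$ itself it cannot. Kausz's actual hypotheses (Lemma \ref{lem:kausz}) are $\cR\subset R$, $v(r-s)\in 2\ZZ$ for all $r,s\in\cR$, $c_f$ a unit, $\#\cR=2g+2$, and at least three residue classes of roots; your description of the base case omits the parity condition, and for a general semistable curve the roots generate a ramified extension (semistability only bounds the ramification of $K(\cR)/K$), while the distances $v(r-s)$ may be odd or even half-integral. All three operations of Proposition \ref{prop:manipulateclusters} shift depths by integers and require $G_K$-stable clusters and a sufficiently large residue field, so no sequence of them makes the roots $K$-rational or repairs the parity condition. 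The missing layer is the reduction to a finite tamely ramified extension: one must prove that \eqref{eqn:lambda} holds over $K$ if and only if it holds over $K'$, which rests on Proposition \ref{prop:H0basechange}, namely $H^0(\regX',\omega_{\regX'/R'})=H^0(\regX,\omega_{\regX/R})\otimes_R R'$ --- proved by comparison with the stable model, since minimal regular models do not commute with ramified base change in general. This is where semistability is used in an essential way, and your outline never invokes it. The same mechanism underlies the unramified extensions that secure the residue-field hypotheses of operations (2) and (3) (Remark \ref{rem:k_closed}) and the normalization of $c_f$ (Lemma \ref{lem:cf}, where one needs $v(c_f)$ even and a square root of $c_f$ in $K^{nr}$). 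Without this step your induction cannot even start on, say, an equation whose roots have pairwise odd valuation differences.
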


A result in \cite{kausz1999discriminant} shows that this formula is true under several additional assumptions (Lemma \ref{lem:kausz}). Our strategy for the proof of the theorem is to pass to a tamely ramified extension of the base field and apply several Möbius transformations until Lemma \ref{lem:kausz} can be applied. In the course of these transformations the valuation of $\lambda$ changes. Hence it is necessary to add correction terms to the original formula. Their computation can be reduced to computing the change of the discriminant of the equation under these Möbius transformations, which has already been done in \cite[Section 16]{dokchitser2018arithmetic}.  A key ingredient for this reduction step is the \textit{hyperelliptic discriminant}, see Definition \ref{def:hyperellipticDiscriminant}. This is a curve invariant which connects the valuation of $\lambda$ with the valuation of the discriminant of a Weierstraß equation.

\begin{rem}
	\label{rem:notation_kausz}
	The starting point for our proof is \cite[Proposition 5.5.2]{kausz1999discriminant}. The results in that article are phrased in a different language. For the convenience of the reader, we shortly introduce the necessary notation and explain how it compares to cluster pictures. This notation will only be used in the proof of the following lemma.
	
	Throughout, let $y^2=f(x)$ be an equation defining a semistable hyperelliptic curve. Assume that the set of roots $\cR$ of $f$ is contained in $R$. 
	Hence for any $n \in \NN \cup \{0\}$, there is a natural map
	\[
	\rho_n: \cR \to R/\cp^n
	\] and one may define
	\[
	\mathcal{V}_n = \{V \in R/\cp^n \mid \#\rho_n^{-1}(V) \geq 2\}.
	\]
	The set
	\[
	\mathcal{V}(T) = \cup_{n\geq 0} \mathcal{V}_n
	\]
	naturally has the structure of a rooted tree. 
	For each $V$ in $\mathcal{V}(T)$, one defines $\phi(V) = |\rho_n^{-1}(V)|$ and $\gamma(V) = \lfloor (\phi(V) - 1)/2 \rfloor$. 
	
	Let us now compare the tree $\mathcal{V}(T)$ to the cluster picture of the equation. Clearly, $\cR$ corresponds to the vertex $V_0$. Let $\cs = \{r_1, \dots, r_m\} \subsetneq \cR$ be a proper cluster with depth $d_{\cs}$ and relative depth $\delta_{\cs}$. Then $\rho_n(\cs) \in \mathcal{V}_n$ if and only if $d_{\cs} -\delta_{\cs}< n \leq d_{\cs}$. Roughly speaking, for each proper cluster $\cs \subsetneq \cR$, the tree $\mathcal{V}(T)$ contains $\delta_{\cs}$ copies of this cluster. Note that $\delta_{\cs}$ is integral by assumption. On the other hand, it is easy to see that each vertex in $\mathcal{V}(T)$ corresponds to some proper cluster and the value $\phi(V)$ is the cardinality of that cluster.
\end{rem}

\begin{lem}
	\label{lem:kausz}
	Let $C/K$ be a hyperelliptic curve defined by $C: y^2=f(x)$ with $f(x) = c_f\prod_{r\in \cR}(x-r)$. Assume that 
\begin{multicols}{2}
	\begin{enumerate}[(i)]
		\item $\cR \subset R$,
		\item $v(r-s) \in 2\ZZ$ for all $ r,\, s \in \cR$,
		\item $c_f$ is a unit in $R$,
		\item $\#\cR = 2g+2$,
		\item $\#\{\bar{r} \,|\, r\in \cR\} \geq 3$.
		\item[]
	\end{enumerate}
\end{multicols}
	Then $v(\lambda_C)$ can be computed using Equation \ref{eqn:lambda}.
\end{lem}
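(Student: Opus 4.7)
The plan is to reduce the lemma directly to \cite[Proposition 5.5.2]{kausz1999discriminant}, which gives a formula for $v(\lambda_C)$ as a sum of local contributions over the vertices of the rooted tree $\mathcal{V}(T)$ introduced in Remark \ref{rem:notation_kausz}. The role of the hypotheses (i)--(v) is precisely to land in the regime where Kausz's result applies: (i) and (iii) make the Weierstrass equation integral with unit leading coefficient, (ii) ensures that all relative depths $\delta_\cs$ and the top depth $d_\cR$ are integers so that $\mathcal{V}(T)$ is well defined with integer edge lengths, (iv) gives the even-degree form $|\cR|=2g+2$, and (v) is the combinatorial nondegeneracy condition at the root that Kausz needs to avoid a collapsed special fibre.

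The second step is to translate Kausz's formula into cluster-picture language using the dictionary recorded in Remark \ref{rem:notation_kausz}: each proper cluster $\cs\subsetneq\cR$ is represented in $\mathcal{V}(T)$ by exactly $\delta_\cs$ vertices, all with $\phi$-value equal to $|\cs|$, while the top cluster $\cR$ contributes $d_\cR$ vertices with $\phi$-value $|\cR|$. Any vertex sum $\sum_{V\in\mathcal{V}(T)} F(\phi(V))$ therefore converts to
\[
\sum_{\cs \neq \cR} \delta_\cs \, F(|\cs|) \;+\; d_\cR \, F(|\cR|).
\]
Applying this rewriting to Kausz's formula yields the right-hand side of (\ref{eqn:lambda}), provided the per-vertex contribution $F(m)$ agrees with $\tfrac{1}{8}(m-2)m$ for even $m$ and $\tfrac{1}{8}(m-1)^2$ for odd $m$. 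Writing $\gamma=\lfloor(m-1)/2\rfloor$, one checks algebraically that $4\gamma(\gamma+1)=(m-2)m$ when $m$ is even and $4\gamma^2=(m-1)^2$ when $m$ is odd, which is precisely the shape in which the contribution appears in Kausz's formula. The $4g\cdot v(c_f)$ term on the right of (\ref{eqn:lambda}) vanishes by (iii), and the top-cluster case falls into the even-size branch because (iv) forces $|\cR|$ to be even.

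The principal obstacle is bookkeeping rather than mathematics: one must align Kausz's normalisations (his choice of integral model, his generator of $\det H^0$, and the indexing conventions for $\mathcal{V}(T)$) with those of \cite{dokchitser2018arithmetic} used throughout this paper, and in particular verify the multiplicity claim that each cluster $\cs\neq\cR$ appears with multiplicity exactly $\delta_\cs$ in $\mathcal{V}(T)$, together with the fact that the top cluster $\cR$ contributes with multiplicity $d_\cR$. Once this dictionary is in hand, the lemma follows immediately from Kausz's proposition together with the elementary algebraic identities for $\gamma$ above.
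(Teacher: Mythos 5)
Your proposal is correct and follows essentially the same route as the paper's proof: both reduce the lemma to Kausz's Proposition 5.5.2 via the dictionary of Remark \ref{rem:notation_kausz} (each proper cluster $\cs \subsetneq \cR$ appearing exactly $\delta_{\cs}$ times in $\mathcal{V}(T)$ with $\phi$-value $|\cs|$) together with the elementary identities $4\gamma(\gamma+1)=(m-2)m$ for $m$ even and $4\gamma^2=(m-1)^2$ for $m$ odd, the remaining terms of (\ref{eqn:lambda}) vanishing because (iii) gives $v(c_f)=0$ and (i)+(v) force $d_{\cR}=0$. The one detail the paper makes explicit that your bookkeeping paragraph should absorb is that Kausz assumes an algebraically closed residue field, which is harmless for $v(\lambda_C)$ by the unramified base-change argument of Remark \ref{rem:k_closed}.
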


\begin{proof}
	Under the conditions in the lemma, Equation \ref{eqn:lambda} reduces to 
	\begin{align}
	\label{eqn:simple}
	8 \,v(\lambda_C) = \sum_{\substack{|\cs| \textrm{ even}\\ \cs \neq \cR}} \delta_{\cs} (|\cs|-2)|\cs| + \sum_{\substack{|\cs| \textrm { odd}\\ \cs \neq \cR}} \delta_{\cs} (|\cs|-1)^2.
	\end{align}
	This formula is due to Kausz, see \cite[Proposition 5.5.2.]{kausz1999discriminant}. The formula in the original source reads as
	\[
	\sum_{i=0}^{g-1} e_i = \frac{1}{2} \sum_{\substack{V>V_0\\\phi(V) \textrm{ even}}} \gamma(V)(\gamma(V) + 1) + \frac{1}{2} \sum_{\substack{V>V_0\\\phi(V) \textrm{ odd}}} \gamma(V)^2
	\] with the notation introduced in the remark preceding this lemma.  Note that the $e_i$'s play the same role as the $e_i$'s defined in Theorem \ref{thm:mainbasis} and it directly follows from the first part of the proposition in \cite{kausz1999discriminant} that $\sum e_i = v(\lambda_C)$. Phrased in terms of cluster pictures, this is precisely  Formula \ref{eqn:simple}. Several conditions are imposed on the equation defining $C$ (cf. \cite[Lemma 4.1]{kausz1999discriminant}). It is easy to see that these are implied by Conditions (i)-(v).
	Moreover, it is assumed that the residue field $k$ is algebraically closed, but this has no effect on the valuation of $\lambda_C$, see Remark \ref{rem:k_closed}.
\end{proof}

Note that the conditions in the above lemma imply semistability, see for example \cite[Theorem 7.1.]{dokchitser2018arithmetic}. Conversely, after a tamely ramified extension of the base field, there always exists an equation for $C$ satisfying the conditions listed in the above lemma if $C$ is semistable.

\begin{prop}
	\label{prop:H0basechange}
	Let $C/K$ be a semistable hyperelliptic curve with minimal regular model $\regX \rightarrow \Spec R$. Let $K'/K$ be a finite field extension. Write $C'$ for the base-change of $C$ to $K'$, $R' = \mathcal{O}_{K'}$ and $\regX'\rightarrow \Spec R'$ for the minimal regular model of $C'$.
	Then
	\[H^0(\regX', \omega_{\regX'/R'}) = H^0(\regX, \omega_{\regX/ R}) \otimes_R R'\]
	inside $H^0(C', \Omega_{C'/K'}) = H^0(C, \Omega_{C/K}) \otimes_K K'$. 
\end{prop}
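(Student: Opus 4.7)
The plan is to bridge $\regX'$ and $\regX$ through the intermediate scheme $\regX \times_R R'$, handling the two steps by separate techniques: the first by a singularity analysis of the base change, the second by flat base change. The unramified case follows directly from Remark \ref{rem:k_closed}, so I would focus on the ramified case.

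First I would analyse the singularities of $\regX \times_R R'$. Since $C$ is semistable and $\regX$ is regular, \'etale-locally around each node $P$ of the special fibre the scheme $\regX$ is isomorphic to $\Spec R[x,y]/(xy-\pi)$. Writing $\pi = u\,\pi_{R'}^{e}$ where $e$ is the ramification index of $K'/K$ and $u \in (R')^\times$, the base change locally becomes $\Spec R'[x,y]/(xy - u\,\pi_{R'}^{e})$, an $A_{e-1}$-singularity when $e\geq 2$. Away from the nodes the base change is already regular, and the minimal desingularisation $\rho\colon \regX'' \to \regX \times_R R'$ replaces each $A_{e-1}$-singularity by a chain of $e-1$ $(-2)$-curves. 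A check shows that no $(-1)$-curve appears in the resulting special fibre (the inserted curves are $(-2)$-curves, and the components of $\regX_{s}$ retain their non-$(-1)$-status by minimality of $\regX$), so $\regX'' = \regX'$ is the minimal regular model of $C'$.

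For the key step I would argue that $\rho_* \omega_{\regX'/R'} = \omega_{(\regX\times_R R')/R'}$. The $A_{e-1}$-singularities of $\regX \times_R R'$ are canonical (rational double points), so the discrepancies of the minimal resolution $\rho$ vanish and $\omega_{\regX'/R'} = \rho^* \omega_{(\regX\times_R R')/R'}$. Combined with the projection formula and $\rho_* \mathcal{O}_{\regX'} = \mathcal{O}_{\regX \times_R R'}$, which holds since the target is normal and $\rho$ is a proper birational morphism, this yields the identity. Taking global sections gives
\[
H^0(\regX', \omega_{\regX'/R'}) = H^0\bigl(\regX \times_R R',\, \omega_{(\regX \times_R R')/R'}\bigr).
\]

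Finally, the morphism $\regX \to \Spec R$ is proper and flat with Gorenstein fibres (nodal curves), so the relative dualising sheaf commutes with the flat base change $\Spec R' \to \Spec R$; hence $\omega_{(\regX \times_R R')/R'} = g^* \omega_{\regX/R}$ for $g\colon \regX \times_R R' \to \regX$ the projection. Flat base change for cohomology then gives $H^0(\regX \times_R R', g^* \omega_{\regX/R}) = H^0(\regX, \omega_{\regX/R}) \otimes_R R'$, which chains with the previous equality to finish the proof. The chief obstacle is justifying $\rho_* \omega_{\regX'/R'} = \omega_{(\regX\times_R R')/R'}$ cleanly; invoking the canonical-singularity structure of rational double points is the slickest route, but one could also verify it directly by local computation along the explicit $(-2)$-curve resolution.
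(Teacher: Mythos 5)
Your proposal is correct, but it runs in the opposite direction to the paper's proof. The paper goes \emph{down}: it contracts $\regX \to \stY$ and $\regX' \to \stY'$ onto the stable models, uses Liu, Corollary 9.4.18, to see that these contractions preserve the global sections of the dualising sheaf, and then exploits that the stable model commutes with base change, $\stY' = \stY \times_R R'$, so that only base change for the dualising sheaf (Liu, Theorem 6.4.9(b)) and flat base change for cohomology remain. You go \emph{up}: you base-change $\regX$ itself, identify the $A_{e-1}$ rational double points over the nodes, and realise $\regX'$ as the minimal resolution of $\regX \times_R R'$, with $\rho_*\omega_{\regX'/R'} = \omega_{(\regX\times_R R')/R'}$ playing the role of the paper's contraction step; the final base-change steps are then identical. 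The paper's route is shorter and needs no singularity analysis, but treats existence, uniqueness and base-change compatibility of the stable model as a black box; yours is more self-contained and exhibits $\regX'$ explicitly. Two points would tighten yours: the step you flag as the chief obstacle follows from the very citation the paper uses, since the exceptional chains have negative-definite intersection matrix and $\omega_{\regX'/R'}\cdot E = 2p_a(E)-2-E^2 = 0$ by adjunction, so Liu 9.4.18 applies to $\rho$ verbatim and no general theory of canonical singularities is needed; and you should explicitly factor $K'/K$ as unramified followed by totally ramified (possible as $k$ is perfect), since for a nontrivial residue extension components of $\regX_s$ may split over $k'$, and your minimality check for the resolution is only clean when $k'=k$, the unramified part being covered by Remark \ref{rem:k_closed}.
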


\begin{proof}
	Let $\stY \rightarrow \Spec R$ and $\stY' \rightarrow \Spec R'$ be the stable models of $C/K$ and $C'/K'$ respectively.
	The stable model $\stY$ is obtained from $\regX$ by contraction of all components $\Gamma$ of the special fibre for which $K_{\regX/ R} . \Gamma = 0$. Write $f: \regX \rightarrow \stY$ for the contraction morphism. Since the intersection matrix of the contracted components is negative definite, it follows from \cite[Corollary 9.4.18.]{liu2002alggeo} that $\omega_{\regX/R} = f_*\omega_{\stY/R}$. Therefore $ H^0(\regX, \omega_{\regX/R}) = H^0(\stY,\omega_{\stY/R})$ inside $H^0(C, \Omega_{C/K})$. By the same reasoning $ H^0(\regX', \omega_{\regX'/ R'}) = H^0(\stY',\omega_{\stY'/ R'})$.
	So it suffices to show that $H^0(\stY',\omega_{\stY'/R'}) = H^0(\stY,\omega_{\stY/R}) \otimes_R R'$.
	
	 We have $\stY' = \stY \times_R R'$ and by \cite[Theorem 6.4.9.b]{liu2002alggeo} $\omega_{\stY'/ R'} = p^* \omega_{\stY/R}$, where $p : \stY \times_R R' \rightarrow \stY$ is the first projection. So the result follows from \cite[Corollary 5.2.27]{liu2002alggeo}.
\end{proof}

\begin{lem}
	\label{lem:tame}
	Let $C/K$ be a hyperelliptic curve with semistable reduction defined by $C: y^2=f(x)$. Let $K'/K$ be a finite extension and write $C'$ for the base-change of $C$ to $K'$. Then Equation \ref{eqn:lambda} holds for $C/K$ if and only if it holds for $C'/K'$.
\end{lem}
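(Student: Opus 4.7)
The plan is to compare both sides of Equation \ref{eqn:lambda} when passing from $K$ to $K'$ and show that each side scales by the same integer, namely the ramification index $e = e(K'/K)$. Write $v'$ for the normalised valuation on $K'$, so that $v'|_K = e \cdot v$, and let $R' = \mathcal{O}_{K'}$.

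First I would handle the left-hand side via Proposition \ref{prop:H0basechange}. Since $\omega := \omega_0 \land \dots \land \omega_{g-1}$ is a fixed element of $H^0(C,\Omega^1_{C/K})$ which maps to the corresponding exterior product in $H^0(C',\Omega^1_{C'/K'})$ under the natural inclusion $H^0(C,\Omega^1_{C/K}) \hookrightarrow H^0(C,\Omega^1_{C/K}) \otimes_K K'$, the proposition together with taking $\det$ gives
\[
\det H^0(\regX',\omega_{\regX'/R'}) \;=\; \det H^0(\regX,\omega_{\regX/R}) \otimes_R R'.
\]
Hence $\lambda_C \cdot \omega$ still generates the right-hand module as an $R'$-module, so $\lambda_{C'}$ and $\lambda_C$ differ by a unit in $R'$. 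Consequently $v'(\lambda_{C'}) = v'(\lambda_C) = e \cdot v(\lambda_C)$.

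Next I would check that the right-hand side of \eqref{eqn:lambda} also scales by exactly $e$. The set of roots $\cR$ does not change under base change, and the definition of a cluster (as $D \cap \cR$ for some disc $D$) uses the unique extension of the valuation from $K$ to $\overline{K} = \overline{K'}$. Therefore the collection of clusters is unchanged, and in particular the sizes $|\cs|$, the inclusion relations among clusters, and the distinguished cluster $\cR$ are all identical for $C$ and $C'$. What does change are the numerical invariants: the valuation of the leading coefficient $c_f$ scales by $e$, and so do the depths $d_{\cs}$, the relative depths $\delta_{\cs}$, and $d_{\cR}$ (since all of these are defined as valuations of elements of $\overline{K}$, measured against the normalised valuation of the base field). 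Substituting these scalings into the right-hand side of \eqref{eqn:lambda}, each of the four summands is multiplied by $e$, so the entire right-hand side for $C'/K'$ equals $e$ times the right-hand side for $C/K$.

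Putting the two scalings together, the identity $8 v(\lambda_C) = (\text{RHS for } C/K)$ is equivalent to its multiplication by $e$, which is precisely $8 v'(\lambda_{C'}) = (\text{RHS for } C'/K')$. This yields the biconditional claimed in the lemma. The argument is essentially bookkeeping; no step should present a serious obstacle, as the only nontrivial input is Proposition \ref{prop:H0basechange}, and notably no tameness hypothesis on $K'/K$ is required.
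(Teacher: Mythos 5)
Your proposal is correct and takes essentially the same route as the paper's proof: both sides of Equation \ref{eqn:lambda} scale by the ramification index $e_{K'/K}$, with Proposition \ref{prop:H0basechange} giving $v'(\lambda_{C'}) = e_{K'/K}\cdot v(\lambda_C)$ and the invariance of the clusters (depths and relative depths multiplied by $e_{K'/K}$) handling the right-hand side. Your extra detail---that $\det$ commutes with the base change $\otimes_R R'$, so $\lambda_{C'}$ and $\lambda_C$ differ by a unit of $R'$---merely spells out the step the paper leaves implicit, and your observation that no tameness is needed here is likewise consistent with the paper.
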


\begin{proof}
	 Let $e_{K'/K}$ denote the ramification degree of the extension $K'/K$. We write $\Sigma$ for the cluster picture associated to $C:y^2 = f(x)$ and $\Sigma'$ for the cluster picture associated to the equation $C':y^2=f(x)$ over $K'$. The clusters themselves do not change under a finite extension, but their depths do. More precisely we have $\delta_{\cs}' = e_{K'/K} \cdot \delta_{\cs}$ for each proper cluster $\cs\neq \cR$  and $d_{\cR}' = e_{K'/K} \cdot d_{\cR}$.
	 We write $R' := \mathcal{O}_{K'}$ and $v'$ for the normalised valuation. That is $v'(r) = e_{K'/K}\cdot v(r)$ for all $r\in R$.
	 
	 For $\lambda_{C'}$, Equation \ref{eqn:lambda} yields
	 \begin{align*}
	 8\,v'(\lambda_{C'})
	 = &e_{K'/K} \cdot \Bigg(g \cdot v(c_f) +  \sum_{\substack{|\cs| \textrm{ even}\\ \cs \neq \cR}}  \delta_{\cs} (|\cs|-2)|\cs| + \sum_{\substack{|\cs| \textrm { odd}\\ \cs \neq \cR}}  \delta_{\cs}(|\cs|-1)^2 \\
	 &~~~~~~~~~+ d_{\cR} \begin{cases} (|\cR|-2)|\cR|, & \text{if $|\cR| = 2g+2$}\\
	(|\cR|-1)^2, & \text{if $|\cR| = 2g+1$} 
	 \end{cases}\Bigg).\\
	 \end{align*}
	 From Proposition \ref{prop:H0basechange} it follows that $v'(\lambda_{C'})=e_{K'/K} \cdot v(\lambda_C)$. So the above calculation shows that Formula \ref{eqn:lambda} is true for $C/K$ if and only if it is true for $C'/K'$.
\end{proof}

\begin{defn}
	Let $C/K$ be a hyperelliptic curve of genus $g$, defined by some Weierstraß equation $y^2 =f(x)$. We denote by $c_f$ the leading coefficient of $f$. Then the \textit{discriminant} $\Delta$ of the equation is defined as \[\Delta :=  2^{4g}\, c_f^{4g+2} \disc \left(\frac{1}{c_f} \,f(x)\right).\]
\end{defn}

While the discriminant defined above is not a curve invariant, but depends on the equation, there exists a more natural definition of discriminant. See also the paragraph before Proposition 2.2. in \cite{kausz1999discriminant}.

\begin{defn}
	\label{def:hyperellipticDiscriminant}
	Let $C/K$ be a hyperelliptic curve of genus $g$, defined by some Weierstraß equation $y^2 =f(x)$ with discriminant $\Delta$. We associate to this equation the differential forms $\omega_0, \dots, \omega_{g-1}$ and write $\omega = \omega_0 \land \dots \land \omega_{g-1} \in H^0(C,\Omega^1_{C/K})$.
	
	Then the element
	\[\Lambda:=\Delta^g \cdot \omega^{\otimes 8g+4} \in (\det H^0(C,\Omega^1_{C/K}))^{\otimes 8g+4}\]
	is called \textit{hyperelliptic discriminant} of $C$.
\end{defn}

The following proposition shows that $\Lambda$ is well-defined.

\begin{prop}
	\label{fct:disc}
	Let $C/K$ be a hyperelliptic curve with hyperelliptic discriminant $\Lambda$. Let $y^2 =f(x)$ be some Weierstraß equation for $C$. We associate to this equation the elements $\Delta$ and $\lambda$.
	Then the following statements are true.
	\begin{enumerate}
		 \item The element $\Lambda$ is independent of the choice of equation. 
		 \item  Viewed as a rational section of $(\det H^0(\regX,\omega_{\regX/R}))^{\otimes 8g+4}$, the order of vanishing in $\mathfrak{p}$ is given by
		\[\ord_{\mathfrak{p}}(\Lambda) = g\cdot v(\Delta) - (8g+4) \cdot v(\lambda_C). \]
		\item Let $y'^2 = g(x')$ be another equation defining the same curve with $\Delta'$ and $\lambda_{C'}$ the corresponding quantities. Then
	\[\frac{v(\Delta') - v(\Delta)}{8g+4} = \frac{v(\lambda_{C'})-v(\lambda_C)}{g}.\]
	\end{enumerate}
\end{prop}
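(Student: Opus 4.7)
The plan is to establish (1) first, since (2) and (3) are easy consequences.

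Granted (1), the map $\omega \mapsto \lambda_C\cdot\omega$ makes $(\lambda_C\cdot\omega)^{\otimes 8g+4}$ an $R$-basis of $(\det H^0(\regX,\omega_{\regX/R}))^{\otimes 8g+4}$. Writing
\[ \Lambda \;=\; \Delta^g\cdot \omega^{\otimes 8g+4} \;=\; \Delta^g\cdot\lambda_C^{-(8g+4)}\cdot(\lambda_C\cdot\omega)^{\otimes 8g+4} \]
immediately gives $\ord_{\mathfrak{p}}(\Lambda)=g\,v(\Delta)-(8g+4)\,v(\lambda_C)$, which is (2). Applying this to a second equation, and using that by (1) the left-hand side $\ord_{\mathfrak{p}}(\Lambda)$ is intrinsic, yields
\[ g\,v(\Delta)-(8g+4)\,v(\lambda_C) \;=\; g\,v(\Delta')-(8g+4)\,v(\lambda_{C'}), \]
which rearranges into (3).

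For (1) the plan is to check invariance under a generating set for the change-of-equation group. Any two Weierstraß equations for the same hyperelliptic curve are related by a transformation
\[ x \;=\; \frac{ax'+b}{cx'+d}, \qquad y \;=\; \frac{e\,y'}{(cx'+d)^{g+1}}, \]
with $u:=ad-bc\neq 0$ and $e\neq 0$. A direct differentiation gives
\[ \omega_i \;=\; \frac{u}{e}\,(ax'+b)^i(cx'+d)^{g-1-i}\cdot\frac{dx'}{2y'}. \]
Expanding each polynomial $(ax'+b)^i(cx'+d)^{g-1-i}$ in the basis $1,x',\dots,x'^{g-1}$ yields a transition matrix of the form $(u/e)\,M$. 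The classical identity $\det M=u^{g(g-1)/2}$ (verifiable by induction on $g$, or as a special case of a Vandermonde-type evaluation) then gives
\[ \omega \;=\; \frac{u^{g(g+1)/2}}{e^{g}}\,\omega'. \]
On the discriminant side, the substitution transforms the roots by $r\mapsto (dr-b)/(a-cr)$ with $r_i'-r_j'=u(r_i-r_j)/((a-cr_i)(a-cr_j))$, and multiplies the leading coefficient by $e^{-2}\prod_r(a-cr)$. Combined with the definition $\Delta=2^{4g}c_f^{4g+2}\prod_{i<j}(r_i-r_j)^2$, the factors $\prod(a-cr)$ cancel and one obtains
\[ \Delta' \;=\; u^{(2g+2)(2g+1)}\,e^{-(8g+4)}\,\Delta. \]
(The case $|\cR|=2g+1$ is handled by first moving the point at infinity to a finite root via such a Möbius transformation, reducing to the even case; the leading coefficient acquires an extra factor from the root at infinity, but the same exponent identities hold.) Multiplying out,
\[ \Lambda' \;=\; \Delta'^{g}\,\omega'^{\otimes 8g+4} \;=\; u^{\,g(2g+2)(2g+1)-\tfrac{g(g+1)}{2}(8g+4)}\cdot e^{\,-g(8g+4)+g(8g+4)}\cdot\Lambda \;=\; \Lambda, \]
since both exponents vanish: $g(2g+2)(2g+1)=2g(g+1)(2g+1)=\tfrac{g(g+1)}{2}(8g+4)$.

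The main obstacle is purely bookkeeping: establishing $\det M=u^{g(g-1)/2}$, tracking how $\Delta$ changes when $|\cR|$ jumps between $2g+1$ and $2g+2$ under a Möbius transformation that sends $\infty$ to a finite point, and confirming the exponent identities. Once these are in hand the proof is formal.
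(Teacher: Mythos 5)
Your proposal is correct, but it takes a genuinely different route from the paper: the paper proves part (1) purely by citation to Kausz (Proposition 2.2.1 of \cite{kausz1999discriminant}), part (2) by noting it is immediate from the definition of $\Lambda$ (again with a pointer to Kausz, Section 5, Formula 1), and part (3) as a direct consequence of (1) and (2) --- exactly the logical skeleton you use for (2) and (3), where your derivation $\Lambda = \Delta^g\lambda_C^{-(8g+4)}(\lambda_C\omega)^{\otimes 8g+4}$ over the discrete valuation ring $R$ is the intended argument. What you do differently is to re-prove Kausz's invariance statement (1) from scratch: your transformation law $\omega_i = \frac{u}{e}(ax'+b)^i(cx'+d)^{g-1-i}\frac{dx'}{2y'}$, the determinant $\det M = u^{g(g-1)/2}$ (which is $\det \mathrm{Sym}^{g-1}$ of the M\"obius matrix), the root and leading-coefficient transformation giving $\Delta' = u^{(2g+2)(2g+1)}e^{-(8g+4)}\Delta$, and the cancellation $g(2g+2)(2g+1) = \tfrac{g(g+1)}{2}(8g+4)$ are all correct, and I have checked that your parenthetical claim about the degenerate case also holds: when a root $r$ is sent to infinity, the corresponding factor of $\prod_i((a-cr_i)x'+(b-dr_i))$ becomes constant, the leading coefficient absorbs $\prod_{i}(r-r_i)$ over the remaining $2g+1$ roots, and one finds $\Delta' = e^{-(8g+4)}\Delta$ with $u^{(2g+2)(2g+1)}$ trivially a square power of $u$, so the exponent identities persist when $|\cR|$ jumps between $2g+2$ and $2g+1$. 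What each approach buys: yours makes the result self-contained (useful, since in odd residue characteristic the transformations you list do exhaust all changes of Weierstra{\ss} equation for $y^2=f(x)$, so checking them suffices), at the cost of the bookkeeping you flag; the paper's citation is shorter and defers precisely those degenerate-case verifications to Kausz, where they are carried out in full. The only thin spot in your write-up is that the odd-degree case is asserted rather than computed, but as noted above the computation does go through, so this is a matter of completeness rather than correctness.
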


\begin{proof}
	\begin{enumerate}
		\item This is Proposition 2.2.1. in \cite{kausz1999discriminant}.
		\item This follows from the definition of $\Lambda$. See also \cite[Section 5, Formula 1]{kausz1999discriminant}.
		\item This is a direct consequence of the first two statements.
	\end{enumerate}
\end{proof}
	
\begin{lem}
	\label{lem:cf}
	Let $C/K$ be a hyperelliptic curve defined by $C:y^2 =f(x)$ with $f(x)=c_f\prod_{r\in \cR}(x-r)$ and $v(c_f)$ in $2\ZZ$. Then the curve $C'/K$ defined by  $y'^2 = \prod_{r\in \cR}(x-r)$ is isomorphic to $C$ over $K^{nr}$ and
	\[v(\lambda_C) = v(\lambda_{C'}) +g \cdot \frac{v(c_f)}{2}.\]
	
\end{lem}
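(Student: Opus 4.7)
The plan is to exploit the substitution $y = \alpha y'$ with $\alpha^2 = c_f$, which realises the isomorphism $C \cong C'$ over $K^{nr}$ and then to track how $\omega$ transforms.

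First, I would produce the isomorphism. Since $v(c_f) \in 2\ZZ$, we may write $c_f = u\pi^{2m}$ with $u\in R^\times$ and $m = v(c_f)/2$. Because $p\neq 2$ and the residue field of $R^{nr}$ is separably closed, Hensel's lemma provides $\sqrt{u}\in(R^{nr})^\times$. Setting $\alpha = \pi^m\sqrt{u}$, so that $v(\alpha) = v(c_f)/2$, the map
\[
\phi\colon C'_{K^{nr}} \longrightarrow C_{K^{nr}},\qquad (x,y')\mapsto (x,\alpha y'),
\]
is a $K^{nr}$-isomorphism: indeed $(\alpha y')^2 = c_f\prod_{r\in\cR}(x-r)$.

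Next I would compute the pullback on the differentials associated to the two equations. Writing $\omega'_i = x^i\,dx/(2y')$, we have
\[
\phi^*\omega_i = \phi^*\!\left(\frac{x^i\,dx}{2y}\right) = \frac{x^i\,dx}{2\alpha y'} = \alpha^{-1}\omega'_i,
\]
hence $\phi^*\omega = \alpha^{-g}\omega'$.

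Finally I would transfer the computation of $\lambda$ to $K^{nr}$ and compare generators. By Proposition \ref{prop:H0basechange} applied to the unramified extension $K^{nr}/K$, the element $\lambda_C\cdot\omega$ generates $\det H^0(\regX_{R^{nr}},\omega_{\regX_{R^{nr}}/R^{nr}})$ as an $R^{nr}$-module, and likewise $\lambda_{C'}\cdot\omega'$ generates the analogous determinant for $C'$. The isomorphism $\phi$ extends uniquely to an isomorphism of the minimal regular models over $R^{nr}$ (by their uniqueness up to isomorphism), under which the determinant line bundles are identified; consequently $\phi^*(\lambda_C\cdot\omega) = \lambda_C\alpha^{-g}\omega'$ is again a generator. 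Thus $\lambda_C\alpha^{-g}$ and $\lambda_{C'}$ differ by a unit in $R^{nr}$, and taking valuations (which are preserved by the unramified extension) yields $v(\lambda_C) - g\cdot v(\alpha) = v(\lambda_{C'})$, i.e.\ the claimed formula.

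There is essentially no obstacle here: the only point requiring care is ensuring that unramified base change preserves both the valuation of $\lambda$ and the integral structure on $\det H^0$, and this is exactly what Proposition \ref{prop:H0basechange} together with the uniqueness of the minimal regular model over $R^{nr}$ deliver.
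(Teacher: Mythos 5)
Your argument is correct, but it takes a genuinely different route from the paper's. The paper never exhibits the isomorphism beyond noting that $c_f$ is a square in $K^{nr}$: instead it computes, directly from the definition $\Delta = 2^{4g}c_f^{4g+2}\disc\bigl(\prod_{r\in\cR}(x-r)\bigr)$, that rescaling the leading coefficient changes the discriminant valuation by $(4g+2)\,v(c_f)$, and then feeds this into part 3 of Proposition \ref{fct:disc} (the invariance of the hyperelliptic discriminant $\Lambda = \Delta^g\cdot\omega^{\otimes 8g+4}$), which immediately yields $v(\lambda_C)-v(\lambda_{C'}) = g\cdot v(c_f)/2$. You instead make the isomorphism explicit via $\alpha$ with $\alpha^2=c_f$, pull back the differentials to get $\phi^*\omega = \alpha^{-g}\omega'$, and compare generators of the determinant modules using the uniqueness of the minimal regular model; this is more self-contained (no discriminant machinery needed) and explains transparently why the correction is $g\cdot v(\alpha)$, whereas the paper's route has the advantage of uniformity, since the same $\Lambda$-mechanism handles all the other correction terms in Lemma \ref{lem:manipulatesigma} via the discriminant changes computed in \cite{dokchitser2018arithmetic}. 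One imprecision in your write-up: Proposition \ref{prop:H0basechange} is stated for \emph{finite} extensions, while $K^{nr}/K$ is infinite. This is harmless and fixable in two ways: either cite Remark \ref{rem:k_closed}, where the paper records (via \cite[Lemma 10.3.33(b)]{liu2002alggeo}) that formation of the minimal regular model and of $H^0(\regX,\omega_{\regX/R})$ commutes with passing to the maximal unramified extension, or observe that with $c_f = u\pi^{2m}$ the element $\sqrt{u}$ already lies in the finite unramified extension $K(\sqrt{u})/K$ of degree at most $2$, so Proposition \ref{prop:H0basechange} applies verbatim there; with that substitution your proof is complete and yields exactly the stated formula.
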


\begin{proof}
	Since $v(c_f)$ is even, $c_f$ is a square in $K^{nr}$ and the two equations define isomorphic curves over $K^{nr}$.
	
	By definition, the discriminant of the equation $y^2=c_f\prod_{r\in \cR}(x-r)$ is \[\Delta = 2^{4g}\, c_f^{4g+2}\,\disc(\prod_{r\in \cR}(x-r)).\] So 
	\[v(\Delta) = v(\Delta') + (4g+2)\, v(c_f).\]
	Using part 3 of Proposition \ref{fct:disc}, we get
	\[v(\lambda_C) = v(\lambda_{C'}) +g \cdot \frac{v(c_f)}{2}. \]
\end{proof}

\begin{lem}
	\label{lem:manipulatesigma}
	Let $C/K$ be a hyperelliptic curve and $y^2 = f(x)$ a Weierstraß equation defining this curve. Let $\Sigma$ be its cluster picture and $\lambda_C$ the quantity associated to this equation.
	
	Let $y'^2=g(x')$ be a different equation for $C$. Denote by $\Sigma'$ and $\lambda_{C'}$ the corresponding elements.
	\begin{enumerate}
		\item \label{lem:increasedepth} If $\Sigma'$ is obtained from $\Sigma$ by increasing the depths of all clusters by some $t\in \ZZ$, then
		\[v(\lambda_C) = v(\lambda_{C'}) - \frac{t}{8} \cdot \begin{cases} (|\cR|-2)|\cR|, & \text{if $|\cR| = 2g+2$}\\
		(|\cR|-1)^2, & \text{if $|\cR| = 2g+1$}
		\end{cases}\\ \]
		\item \label{lem:addroot}If $\Sigma'$ is obtained from $\Sigma$ by adding a root to $\cR$, then
		\[v(\lambda_C) = v(\lambda_{C'}) - \frac{d_{\cR} (|\cR|-1)}{4}.\]
		\item \label{lem:redistributedepth} If $\cR$ has even size and $\Sigma'$ is obtained from $\Sigma$ by redistributing the depth between $\cs <\cR$ and $\cR \backslash \cs$ to $d_{\cs}' = d_{\cs} - t$ and  $d_{\cR \backslash \cs}' = d_{\cR \backslash \cs} +t$, then 
		\[v(\lambda_C) = v(\lambda_{C'}) - t \cdot \frac{(|\cR|-2)(|\cR|-2|\cs|)}{8}.\]
	\end{enumerate}
\end{lem}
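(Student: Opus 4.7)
The plan is to reduce each of the three parts to a discriminant computation via Proposition \ref{fct:disc}(3), which gives
\[v(\lambda_{C'}) - v(\lambda_C) = \frac{g}{8g+4}\bigl(v(\Delta') - v(\Delta)\bigr).\]
It thus suffices to compute $v(\Delta') - v(\Delta)$ under each of the three Möbius transformations from Proposition \ref{prop:manipulateclusters}. The underlying discriminant calculations have essentially been carried out in \cite[Section 16]{dokchitser2018arithmetic}; the work here is to translate those results into cluster-picture language and substitute into the displayed relation. If necessary, we pass to an unramified extension to make the required substitutions $K$-rational; this is permissible by Lemma \ref{lem:tame} together with Proposition \ref{prop:H0basechange}.

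For Part \ref{lem:increasedepth}, the transformation is $x \mapsto \pi^{t} x$, combined with an appropriate rescaling of $y$ so that the new leading coefficient has the same valuation as the old one. Since every pairwise difference of roots gains a valuation of $t$, this gives
\[v(\Delta') - v(\Delta) \;=\; t \cdot |\cR|(|\cR|-1).\]
Plugging into the displayed formula and distinguishing $|\cR| = 2g+2$ from $|\cR| = 2g+1$ yields the two cases in the statement.

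For Part \ref{lem:addroot}, one picks $\alpha \in R$ with $v(\alpha - z_\cR) = d_\cR$ whose reduction differs from that of every root (possible by the counting hypothesis in Proposition \ref{prop:manipulateclusters}) and applies $\phi(x) = (x-\alpha)^{-1}$; the new roots are $\{(r-\alpha)^{-1} : r \in \cR\} \cup \{0\}$. Expanding $\prod_{i<j}\bigl((r_i-\alpha)^{-1} - (r_j-\alpha)^{-1}\bigr)^2$, incorporating the $(r-\alpha)^{-1} \cdot 0$ factors coming from the new root, and keeping track of the new leading coefficient yields the required change $v(\Delta')-v(\Delta)$. Part \ref{lem:redistributedepth} is handled by the same method, using the Möbius transformation realizing the given redistribution and splitting pairs of roots into three types (both in $\cs$, both in $\cR \setminus \cs$, or one in each).

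The anticipated main obstacle is the bookkeeping of the leading coefficient under the non-affine Möbius transformations in Parts \ref{lem:addroot} and \ref{lem:redistributedepth}, since $v(c_f)$ enters $v(\Delta)$ through the $(4g+2) v(c_f)$ term and interacts nontrivially with $d_\cR$; once this is done correctly, the cluster-picture formulas in the lemma reduce to routine algebraic manipulation.
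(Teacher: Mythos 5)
Your proposal follows the paper's proof essentially verbatim: the paper likewise reduces all three parts to the change in $v(\Delta)$ via Proposition~\ref{fct:disc}(3), and for the quantities $v(\Delta')-v(\Delta)$ it simply cites \cite[Lemma 16.6.(i),(ii),(iv)]{dokchitser2018arithmetic} (giving $t\,|\cR|(|\cR|-1)$, $2\,d_{\cR}\,|\cR|$ and $t\,(|\cR|-2|\cs|)(|\cR|-1)$ respectively) before performing the same algebraic simplifications you indicate. The explicit Möbius-transformation computations you sketch for Parts 2 and 3, including the leading-coefficient bookkeeping you flag as the main obstacle, are precisely the content of that cited lemma, so your plan is correct and coincides with the paper's route.
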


\begin{proof}
	From \cite[Lemma 16.6.]{dokchitser2018arithmetic}, we know how the discriminant changes under the above modifications of the cluster picture. We will combine these results with Proposition \ref{fct:disc}, part 3.
	\begin{enumerate}
		\item By  \cite[Lemma 16.6.(i)]{dokchitser2018arithmetic}
		\[v(\Delta') - v(\Delta) = t |\cR|(|\cR|-1).\]
		Now Proposition \ref{fct:disc} implies
		\begin{align*}
		v(\lambda_{C'})-v(\lambda_C) = &\,g\cdot \frac{ t\, |\cR|(|\cR|-1)}{8g+4}\\
		=&\,\frac{t}{8} \cdot \begin{cases} (|\cR|-2)|\cR|, & \text{if $|\cR| = 2g+2$}\\
		(|\cR|-1)^2, & \text{if $|\cR| = 2g+1$}
		\end{cases}\\
		\end{align*}
		\item By \cite[Lemma 16.6.(ii)]{dokchitser2018arithmetic}
		\[v(\Delta') - v(\Delta) = 2\, d_{\cR}\, |\cR|.\]
		Now Proposition \ref{fct:disc} implies
		\begin{align*}
		v(\lambda_{C'})-v(\lambda_C) = &g\cdot \frac{2\,d_{\cR}\, |\cR|}{8g+4}\\
		=& \frac{|\cR|-1}{2} \cdot \frac{d_{\cR}}{2}.
		\end{align*}
		\item By \cite[Lemma 16.6.(iv)]{dokchitser2018arithmetic}
		\[v(\Delta') - v(\Delta) = t \cdot (|\cR|-2|\cs|)(|\cR|-1).\]
		Now Proposition \ref{fct:disc} implies
		\begin{align*}
		v(\lambda_{C'})-v(\lambda_C) = &g\cdot \frac{t (|\cR|-2|\cs|)(|\cR|-1)}{8g+4}\\
		=& t\cdot \frac{|\cR|-2}{2} \cdot \frac{|\cR|-2|\cs|}{4}.
		\end{align*}
	\end{enumerate}
\end{proof}

\begin{proof}[Proof of Theorem \ref{thm:main_lambda}]
	Let $C/K$ be a hyperelliptic curve with semistable reduction defined by $C: y^2=f(x)$ with $f(x) = c_f\prod_{r\in \cR}(x-r)$. In Lemma \ref{lem:tame} we have seen that it suffices to prove that the formula holds after a finite extension. So we may assume that $\cR \subset K$,  $v(r-s) \in 2\ZZ$ for all $r,s \in \cR$, and $v(c_f) \in 2 \ZZ$. Subtracting the correction term $4g \cdot v(c_f)$ from the right hand side, we may even assume that $v(c_f) = 0$. This follows from Lemma \ref{lem:cf}.
	
	Further we can perform a Möbius transformation such that the cluster picture corresponding to the new equation has outer depth $d_\cR=0$ (see Proposition \ref{prop:manipulateclusters}, part 1). By Lemma \ref{lem:manipulatesigma} this corresponds to subtracting $ d_{\cR}  (|\cR|-2) |\cR|$ if $|\cR| = 2g+2$ or $d_{\cR} (|\cR|-1)^2$ if $|\cR| = 2g+1$ from $8v(\lambda_C)$. Note that decreasing the absolute depths does not change any relative depths.
	
	In case that  $|\cR| =2g+1$, we can perform a Möbius transformation that corresponds to adding one root to the cluster picture (as in Proposition \ref{prop:manipulateclusters}, part 2). Since $d_\cR=0$, this does not change the valuation of $\lambda_C$.
	After these two steps we are left with proving the simplified formula that already appeared in Lemma \ref{lem:kausz}. That is
	\[
		8 \,v(\lambda_C) = \sum_{\substack{|\cs| \textrm{ even}\\ \cs \neq \cR}} \delta_{\cs} (|\cs|-2)|\cs| + \sum_{\substack{|\cs| \textrm { odd}\\ \cs \neq \cR}} \delta_{\cs} (|\cs|-1)^2.
	\]
	with Conditions (ii), (iii) and (iv) of the lemma being satisfied and $d_{\cR} =0$.
	
	Without loss of generality we may always assume $G_K$-stability for clusters because the formula for  $\lambda_C$ behaves well under tamely ramified extension (see Lemma \ref{lem:tame}). So we can apply Part 3 of Proposition \ref{prop:manipulateclusters}, that is redistribute depth between a ($G_K$-stable) cluster $\cs$ and $\cR\backslash\cs$. This allows us to manipulate the cluster picture such that $\cR$ has at least three children in the following way. Assume that $\cR$ has exactly two children, $\cs_1$ and $\cs_2$. Without loss of generality assume that $\cs_1$ is not a root, hence it has at least two children on its own, say $\ct_1$ and $\ct_2$. After redistributing depth between $\cs_1$ and $\cs_2$ in such a way that $d_{\cs_1} = 0$, $\cs_1$ is no longer a cluster but $\cR$ has at least three children given by $\ct_1, \ct_2,\cs_2$. Together with the fact that $d_{\cR}=0$, this implies Condition (v) of Lemma \ref{lem:kausz}. 
	
	Let us show that the formula behaves well when redistributing depths. Let $\cs^* < \cR$ be a cluster in $\Sigma$. Let $\Sigma'$ be the cluster picture obtained after redistributing depth between $\cs^*$ and $\cR \backslash \cs^*$. That is $d_{\cs^*}' = d_{\cs^*} - t$ and  $d_{\cR \backslash \cs^*}' = d_{\cR \backslash \cs^*} +t$ for some $t\in \ZZ$. We have already seen in Lemma \ref{lem:manipulatesigma}, part 3 that 
	\[ 8\,\left(v(\lambda_{C'}) - v(\lambda_C)\right) =t \cdot (|\cR|-2)(|\cR|-2|\cs|).\]
	The calculation below shows that this equals the change on the right hand side of the equation.
\begin{align*}
\sum_{\substack{|\cs| \textrm{ even}\\ \cs \neq \cR}}& \delta_{\cs}' (|\cs|-2)|\cs| + \sum_{\substack{|\cs| \textrm { odd}\\ \cs \neq \cR}} \delta_{\cs}'(|\cs|-1)^2\\
=& \sum_{\substack{|\cs| \textrm{ even}\\ \cs \neq \cs^*,\cR \backslash \cs^*}} \delta_{\cs} (|\cs|-2)|\cs| +  \sum_{\substack{|\cs| \textrm { odd} \\ \cs \neq \cs^*,\cR \backslash \cs^*}} \delta_{\cs}(|\cs|-1)^2 \\
&~~~~~+ \begin{cases} (\delta_{\cs^*}-t) (|\cs^*|-2) |\cs^*| + (\delta_{\cR\backslash\cs^*}+t) (|\cR\backslash\cs^*|-2) |\cR\backslash\cs^*|, & \text{ $|\cs^*|$ even}\\
(\delta_{\cs^*}-t)  (|\cs^*|-1)^2 + (\delta_{\cR\backslash\cs^*}+t) (|\cR\backslash\cs^*|-1)^2, & \text{ $|\cs^*|$ odd}
\end{cases}\\
=&\sum_{\substack{|\cs| \textrm{ even}\\ \cs \neq \cR}} \delta_{\cs} (|\cs|-2)|\cs| + \sum_{\substack{|\cs| \textrm { odd}\\ \cs \neq \cR}} \delta_{\cs}(|\cs|-1)^2\\ &~~~~~~ +
\begin{cases} -t  (|\cs^*|-2)|\cs^*|+ {t}(2g- |\cs^*|)(2g+2-|\cs^*|), & \text{$|\cs^*|$ even}\\
-t(|\cs^*|-1)^2 + {t}(2g + 1 - |\cs^*|)^2, & \text{$|\cs^*|$ odd}
\end{cases}\\
 =& \sum_{\substack{|\cs| \textrm{ even}\\ \cs \neq \cR}} \delta_{\cs} (|\cs|-2)|\cs| + \sum_{\substack{|\cs| \textrm { odd}\\ \cs \neq \cR}} \delta_{\cs}(|\cs|-1)^2 + t (|\cR|-2)(|\cR|-2|\cs|)
\end{align*} 

Now the only condition missing in order to apply Lemma \ref{lem:kausz} is $\cR \subset R$. We already have $\cR \subset K$. Since $d_{\cR} =0$ and $v(c_f) = 0$, it follows from \cite[Theorem 13.3]{dokchitser2018arithmetic} that there exists $z \in K$ such that $f(x-z) \in R[x]$. Clearly, such a shift changes neither the cluster picture nor the valuation of $\lambda$. Hence we may assume $\cR \subset R$ without adding a further correction term.
\end{proof}

\section{A Basis for $H^0(\regX,\omega_{\regX/R})$}
\label{sec:base}

Let $C/K$ be a semistable hyperelliptic curve defined by a Weierstraß equation $C: y^2=f(x)$ with $f(x) = c_f\prod_{r\in \cR}(x-r)$. To this equation we associate the cluster picture $\Sigma$. Let $\regX \rightarrow \Spec R$ be the minimal regular model of $C$.
In this section we show how to read off a basis for the global sections of the canonical sheaf $\omega_{\regX/R}$ from the cluster picture $\Sigma$.
Our result is valid in a more general setting than \cite[Proposition 5.5.1.]{kausz1999discriminant} and at the same time simplifies the construction of the basis described therein.

\begin{thm}
	\label{thm:mainbasis}
Let $C/K$ be a semistable hyperelliptic curve defined by an integral Weierstraß equation $C:y^2=f(x)$ with cluster picture $\Sigma$.
Let $\regX/ R$ be the minimal regular model. Assume that the residue field $k$ is algebraically closed. 

Choose clusters $\cs_0, \dots, \cs_{g-1}$  inductively such that
\[e_i = \frac{\nu_{\cs_i}}{2} - \sum_{j=0}^{i}d_{\cs_j \land \cs_i} = \max \left( \frac{\nu_{\cs}}{2} - \sum_{j=0}^{i-1}d_{\cs_j \land \cs} -d_{\cs}\right),\]
where the maximum is taken over all proper clusters in $\Sigma$.
If the maximal value is obtained by two different clusters $\cs$ and $\cs'$ with $\cs' \subset \cs$, choose  $\cs_i = \cs$. 

Then an $R$-basis for the global sections of the relative dualising sheaf $\omega_{\regX/R}$ is given by $(\mu_0, \dots \mu_{g-1})$, where
\[\mu_i = \pi^{e_i} \prod_{j=0}^{i-1}(x-z_{\cs_{j}}) \frac{dx}{2y}.\]
\end{thm}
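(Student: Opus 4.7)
The plan is to follow the two-step strategy outlined in Section 1.3: first verify that each $\mu_i$ is a regular global section of the relative dualising sheaf, and then invoke Theorem \ref{thm:main_lambda} to show that $\mu_0 \land \cdots \land \mu_{g-1}$ generates $\det H^0(\regX,\omega_{\regX/R})$. Granting both, a standard linear algebra argument over the discrete valuation ring $R$ (if $g$ elements of a free rank-$g$ module have wedge generating the top exterior power, they form a basis) forces $(\mu_0,\dots,\mu_{g-1})$ to be an $R$-basis.

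For the regularity step I would examine the irreducible components of the special fibre of $\regX$. In the semistable setting these split into cluster components $\Gamma_{\cs}$, indexed by the principal (and, where appropriate, outer) clusters, and chains of $\PP^1$'s that resolve the remaining nodes. On the generic point of $\Gamma_{\cs}$ the natural coordinate is $u = (x-z_{\cs})/\pi^{d_{\cs}}$, and, using $y^2=f(x)$ together with Remark \ref{rem:compute_vs}, a direct computation gives
\[
v_{\Gamma_{\cs}}(x - z_{\cs'}) \;=\; d_{\cs \land \cs'}, \qquad v_{\Gamma_{\cs}}\!\left(\frac{dx}{2y}\right) \;=\; d_{\cs} \;-\; \frac{\nu_{\cs}}{2}.
\]
Substituting into $\mu_i = \pi^{e_i}\prod_{j<i}(x-z_{\cs_j})\,\frac{dx}{2y}$ yields
\[
v_{\Gamma_{\cs}}(\mu_i) \;=\; e_i \;+\; \sum_{j=0}^{i-1} d_{\cs_j \land \cs} \;+\; d_{\cs} \;-\; \frac{\nu_{\cs}}{2},
\]
and the defining maximality of $e_i$ over all proper clusters rearranges exactly to $v_{\Gamma_{\cs}}(\mu_i) \geq 0$. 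For the chain components one has to work with local coordinates of the blow-ups resolving the nodes of the stable model; the valuation of each factor of $\mu_i$ depends affinely on the position along the chain, so non-negativity at the two bounding cluster components propagates to every intermediate chain component.

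For the second step, observe that $\prod_{j<i}(x-z_{\cs_j})$ is a monic polynomial in $x$ of degree $i$. Therefore the change-of-basis matrix expressing $(\mu_0,\dots,\mu_{g-1})$ in terms of $(\omega_0,\dots,\omega_{g-1})$ is triangular with diagonal entries $\pi^{e_0},\dots,\pi^{e_{g-1}}$, whence
\[
\mu_0 \land \cdots \land \mu_{g-1} \;=\; \pi^{e_0+\cdots+e_{g-1}}\,\omega.
\]
Since by step one this element lies in $\det H^0(\regX,\omega_{\regX/R})$, while by Theorem \ref{thm:main_lambda} a generator of that rank-one $R$-module has valuation exactly $v(\lambda_C)$, we automatically obtain $\sum_i e_i \geq v(\lambda_C)$. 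To finish, it remains to establish the reverse inequality, i.e.\ the combinatorial identity $\sum_{i=0}^{g-1} e_i = v(\lambda_C)$. I would do this by mirroring the proof of Theorem \ref{thm:main_lambda}: reduce to the setting of Lemma \ref{lem:kausz} via the base change of Lemma \ref{lem:tame}, the rescaling of Lemma \ref{lem:cf} and the Möbius transformations of Proposition \ref{prop:manipulateclusters}, where $\sum e_i = v(\lambda_C)$ is precisely the content of the first part of Kausz's \cite[Proposition 5.5]{kausz1999discriminant}. It then suffices to track how $\sum e_i$ changes under each manipulation and verify that it matches the correction terms of Lemma \ref{lem:manipulatesigma}.

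The main technical obstacle I anticipate is the treatment of the chain components of $\regX$, which do not themselves correspond to clusters and require explicit local equations for the resolution of nodes to justify the interpolation property of valuations. A secondary subtlety lies in the non-uniqueness of the greedy choice of $\cs_i$ when the maximum is attained by two incomparable clusters; one must check that different admissible choices produce the same value of $\sum e_i$, so that the combinatorial identification with $v(\lambda_C)$ is well-posed regardless of the choices made along the way.
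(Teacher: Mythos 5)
Your top-level architecture is the paper's: show each $\mu_i$ is a global section, use the triangular change-of-basis matrix to get $\mu_0\land\dots\land\mu_{g-1}=\pi^{\sum e_i}\omega$, and conclude via Theorem \ref{thm:main_lambda} plus the standard fact that $g$ sections of a free rank-$g$ module whose wedge generates the determinant form a basis. But the two hard steps are handled differently, and one of them has a genuine gap as you have set it up. For regularity, your anticipated ``main technical obstacle'' --- the chain components --- simply does not arise in the paper: since the chains are exactly what the contraction $f\colon\regX\to\stY$ to the stable model collapses, and $\omega_{\regX/R}=f_*\omega_{\stY/R}$ by \cite[Corollary 9.4.18]{liu2002alggeo} (cf.\ the proof of Proposition \ref{prop:H0basechange}), one has $H^0(\regX,\omega_{\regX/R})=H^0(\stY,\omega_{\stY/R})$ inside $H^0(C,\Omega_{C/K})$, and by \cite[Theorem 5.24]{dokchitser2018arithmetic} regularity need only be checked on the components attached to principal clusters. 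Your local computation at $\Gamma_{\cs}$ is then precisely Lemma \ref{lem:e_i} (note the paper only needs, and only proves, a lower bound $v_{\Gamma_{\cs}}(\cdot)\geq -\bigl(\frac{\nu_{\cs}}{2}-\sum_{j<i}d_{\cs_j\land\cs}-d_{\cs}\bigr)$; your asserted equalities are stronger than necessary). So your interpolation argument along chains, while plausibly salvageable, is extra machinery the paper shows to be avoidable.

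The real gap is in your second step. You propose to obtain $\sum_i e_i=v(\lambda_C)$ by rerunning the reduction of Theorem \ref{thm:main_lambda} down to Kausz's setting; but \cite[Proposition 5.5]{kausz1999discriminant} gives $\sum_i e_i'=v(\lambda_C)$ for \emph{Kausz's} $e_i'$, arising from his own basis construction, so you would still have to identify these with the greedy $e_i$ of the theorem (or at least match the sums), and additionally verify that the greedy sequence --- hence $\sum e_i$ --- transforms by exactly the correction terms of Lemma \ref{lem:manipulatesigma} under each M\"obius manipulation, with the argmax structure preserved at every step; this is exactly where your own worry about the non-uniqueness of the greedy choice bites, and your proposal offers no mechanism for it. The paper instead evaluates $\sum e_i$ in closed form, and the key combinatorial input, entirely absent from your proposal, is Lemma \ref{lem:gamma}: for \emph{any} admissible greedy sequence, $\gamma(\cs):=\#\{\cs_i \mid \cs_i\subset\cs\}=\bigl\lfloor\frac{|\cs|-1}{2}\bigr\rfloor$ for every proper cluster $\cs$. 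Feeding this and Remark \ref{rem:compute_vs} into $2\sum_i e_i=\sum_i\bigl(\nu_{\cs_i}-2\sum_{j\leq i}d_{\cs_j\land\cs_i}\bigr)$ reproduces verbatim the right-hand side of Equation \ref{eqn:lambda}, which settles both the identity and the well-posedness question at once (the count $\gamma(\cs)$, hence $\sum e_i$, is independent of the choices). One genuine economy in your write-up worth keeping: regularity alone already gives the inequality $\sum e_i\geq v(\lambda_C)$ for free, so only the combinatorial upper bound is missing --- but for that you need Lemma \ref{lem:gamma} or an equivalent, and without it the proof is incomplete.
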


Note that the same cluster can appear multiple times in the sequence $\cs_0, \dots, \cs_{g-1}$. Moreover the sequence is not canonical, since it can happen that the maximum is obtained by two incomparable clusters, in which case any of the two clusters may be chosen. 
Since $k$ is algebraically closed, one can find a centre $z_{\cs} \in K$ for every proper cluster $\cs$. This follows from \cite[Lemma 4.2.]{dokchitser2018arithmetic}. For an illustration of the theorem, we refer to Example \ref{ex:basis} in the introduction. 

The strategy for the proof is a follows. First, we show that the sum over the $e_i$ is exactly the valuation of $\lambda_C$ as defined in the previous section. For this we need Lemma \ref{lem:gamma}. Then, we show that the differential forms are indeed global sections by computing their order of vanishing along different components of the special fibre of the minimal regular models, see Lemma \ref{lem:e_i}. The theorem then follows as a corollary of Theorem \ref{thm:main_lambda}.

\begin{lem}
	\label{lem:gamma}
	Let $C/K$ be a hyperelliptic curve defined by an integral Weierstraß equation $C:y^2=f(x)$ and $\Sigma$ the associated cluster picture. Let the clusters $\cs_0, \dots, \cs_{g-1} \in \Sigma$ be chosen according to Theorem \ref{thm:mainbasis}. 
	Then for every cluster $\cs \in \Sigma$:
	\[\gamma(\cs) := \# \{\cs_i \,|\, \cs_i \subset \cs\} = \left \lfloor{\frac{|\cs|-1}{2}}\right \rfloor\]
\end{lem}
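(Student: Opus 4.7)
My plan is to analyse the greedy process through the marginal function
\[M_i(\cs) := \frac{\nu_\cs}{2} - d_\cs - \sum_{j<i} d_{\cs_j \land \cs},\]
so that $\cs_i$ maximises $M_i(\cs)$ (with $e_i = M_i(\cs_i)$) and the tiebreaker prefers the larger of two comparable clusters. The cornerstone is the \emph{parent--child identity}: for every proper $\cs \neq \cR$ with parent $P(\cs)$,
\[M_i(\cs) - M_i(P(\cs)) = \delta_\cs\left(\frac{|\cs|-2}{2} - \gamma_i(\cs)\right),\qquad \gamma_i(\cs) := \#\{j<i : \cs_j \subseteq \cs\}.\]
This follows from Remark \ref{rem:compute_vs}, which gives $\nu_\cs - \nu_{P(\cs)} = \delta_\cs |\cs|$, together with the tree structure: for each previously chosen $\cs_j$, the difference $d_{\cs_j \land \cs} - d_{\cs_j \land P(\cs)}$ equals $\delta_\cs$ when $\cs_j \subseteq \cs$ and vanishes otherwise. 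Telescoping along any chain yields $M_i(\ct) - M_i(\cs) = \sum_{\ct \subseteq \cs'' \subsetneq \cs}\delta_{\cs''}\bigl(\tfrac{|\cs''|-2}{2} - \gamma_i(\cs'')\bigr)$ for $\ct \subseteq \cs$.

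The identity translates the greedy rule plus tiebreaker into two combinatorial consequences. \textbf{(i)} The selection $\cs_i = \cs$ forces $M_i(\cs) > M_i(P(\cs))$ strictly, hence $\gamma_i(\cs) < (|\cs|-2)/2$; in both parities this gives $\gamma_i(\cs) \leq N(\cs) - 1$, where $N(\cs) := \lfloor(|\cs|-1)/2\rfloor$. \textbf{(ii)} For every immediate child $\ct < \cs_i$, the inequality $M_i(\cs_i) \geq M_i(\ct)$ combined with the tiebreaker gives $\gamma_i(\ct) \geq N(\ct)$.

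The crux is an induction on $i$ establishing the invariant $\gamma_i(\cs) \leq N(\cs)$. Once it is in hand, (ii) together with the invariant forces $\gamma_i(\ct) = N(\ct)$ for every immediate child $\ct$ at the moment $\cs$ is picked. Writing $\gamma_i(\cs) = a_i(\cs) + \sum_{\ct < \cs} \gamma_i(\ct) = a_i(\cs) + \sum_{\ct} N(\ct)$, with $a_i(\cs) := \#\{j < i : \cs_j = \cs\}$, and combining with (i) yields $a_i(\cs) \leq b(\cs) - 1$ just before picking $\cs$, where $b(\cs) := N(\cs) - \sum_{\ct < \cs,\, \text{proper}} N(\ct)$; hence $a(\cs) \leq b(\cs)$ at the end. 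The telescoping identity $\sum_\cs b(\cs) = N(\cR) = g = \sum_\cs a(\cs)$ together with $a \leq b$ pointwise forces $a(\cs) = b(\cs)$, and then induction on the cluster tree using $\gamma(\cs) = a(\cs) + \sum_{\ct < \cs} \gamma(\ct)$ concludes $\gamma_g(\cs) = N(\cs)$ for every proper $\cs$.

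The main obstacle is the inductive preservation of $\gamma_i \leq N$: when $\cs_i = \ct$ is chosen and some strict ancestor $\cs^* \supsetneq \ct$ happens to satisfy $\gamma_i(\cs^*) = N(\cs^*)$, one must rule out the forbidden increment $\gamma_{i+1}(\cs^*) = N(\cs^*) + 1$. This requires a careful parity analysis of $M_i(\ct) - M_i(P(\cs^*))$ via the telescoped identity -- isolating the $\cs^*$-contribution ($0$ or $-\delta_{\cs^*}/2$ according to the parity of $|\cs^*|$) from the intermediate ones -- and exploiting that, after $\gamma_i(\cs^*) = N(\cs^*)$ is first achieved, the subsequent $M$-values inside $\cs^*$ drop relative to those outside by precisely the amount needed so that the chain of inequalities demanded by the greedy cannot be satisfied by any $\ct \subseteq \cs^*$.
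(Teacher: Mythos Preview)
Your parent--child identity and its consequences (i) and (ii) are correct and are exactly the engine of the paper's proof as well; the paper derives (i) in the same way and then finishes differently, by a short top--down induction on the tree (assuming $\gamma(\cs')=N(\cs')$ for all $\cs'\supsetneq\cs$, it shows that $\gamma(\cs)<N(\cs)$ would prevent $P(\cs)$ from ever being chosen, forcing $\gamma(P(\cs))=\sum_{\ct<P(\cs)}\gamma(\ct)<N(P(\cs))$). Your route via the decomposition $\gamma_i(\cs)=a_i(\cs)+\sum_{\ct<\cs}\gamma_i(\ct)$ and the global count $\sum_\cs a(\cs)=g=\sum_\cs b(\cs)$ is a genuinely different, more structural way to pass from the inequality to equality, and your observation (ii) does not appear in the paper.

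The real gap is your final paragraph. The ``parity analysis'' you sketch does not, as written, rule out the forbidden increment. If $|\cs^*|$ is even and $\gamma_i(\cs^*)=N(\cs^*)$, the $\cs^*$--term in the telescoped sum for $M_i(\ct)-M_i(P(\cs^*))$ vanishes, while the terms for clusters strictly between $\ct$ and $\cs^*$ are $\delta_{\cs''}\bigl(\tfrac{|\cs''|-2}{2}-\gamma_i(\cs'')\bigr)$, which can be strictly positive whenever those $\cs''$ are unsaturated; nothing in your sketch forces them to be saturated at that moment. So the telescoping alone does not contradict $M_i(\ct)>M_i(P(\cs^*))$, and the phrase about $M$--values ``dropping by precisely the amount needed'' is not substantiated. (The paper's own treatment of the upper bound is also terse at this exact point --- it only treats the case $\cs_i=\cs$ explicitly.)

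The fix is to reorganise your own ingredients: prove the invariant $\gamma_i(\cs)\le N(\cs)$ by induction on the cluster tree from the leaves upward, rather than on $i$. For a minimal proper cluster $\cs$, one has $\gamma_i(\cs)=a_i(\cs)$, and (i) gives $a_j(\cs)\le N(\cs)-1$ at every step $j$ with $\cs_j=\cs$, hence $a_i(\cs)\le N(\cs)$ for all $i$. For the inductive step, whenever $\cs_j=\cs$, your (ii) together with the inductive hypothesis for the children forces $\gamma_j(\ct)=N(\ct)$ for every child $\ct<\cs$; then (i) gives $a_j(\cs)=\gamma_j(\cs)-\sum_{\ct}N(\ct)\le N(\cs)-1-\sum_{\ct}N(\ct)=b(\cs)-1$, so $a_i(\cs)\le b(\cs)$ for all $i$, and therefore
\[
\gamma_i(\cs)=a_i(\cs)+\sum_{\ct<\cs}\gamma_i(\ct)\le b(\cs)+\sum_{\ct<\cs}N(\ct)=N(\cs).
\]
This establishes the invariant without any parity analysis; your counting argument $\sum a=\sum b=g$ then yields $a\equiv b$ and the final bottom--up induction gives $\gamma(\cs)=N(\cs)$.
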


\begin{proof}
 Since $\gamma(\cR) = g$, the statement is true for $\cs = \cR$. Let $\cs \neq \cR$ and let $\cs'$ be the parent of $\cs$, that is $\cs < \cs'$. Then 
	\begin{align*}
		d_{\cs} =& d_{\cs'} + \delta_{\cs},\\
		\nu_{\cs} =& \nu_{\cs'}+\delta_{\cs}|\cs|,\\ 
		d_{\cs_j\land\cs} =& \begin{cases}
			d_{\cs_j \land \cs'} + \delta_{\cs} &\textrm{ if } \cs_j \subset \cs\\
			d_{\cs_j \land \cs'}  &\textrm{otherwise}
		\end{cases} ~~\textrm{ for }j \in \{0, \dots, i-1\}.
	\end{align*}
	So for any $i \in \{0, \dots ,g-1\}$
	\[\frac{\nu_{\cs}}{2} - \sum_{j=0}^{i-1}d_{\cs_j \land \cs} -d_{\cs} =  \frac{\nu_{\cs'}}{2} - \sum_{j=0}^{i-1}d_{\cs_j \land \cs'} -d_{\cs'} + \delta_{\cs} \left(\frac{|\cs|}{2} - 1 -\# \{\cs_j \,|\, \cs_j \subset \cs, \;0 \leq j \leq i-1\}\right).\]
	
	Assume that $\cs_i = \cs$, then by construction of the sequence $(\cs_j)$, it must hold that $\frac{\nu_{\cs}}{2} - \sum_{j=0}^{i-1}d_{\cs_j \land \cs} -d_{\cs} >  \frac{\nu_{\cs'}}{2} - \sum_{j=0}^{i-1}d_{\cs_j \land \cs'} -d_{\cs'}$. The equation above then implies $ \# \{\cs_j \,|\, \cs_j \subset \cs, \;0 \leq j \leq i-1\} < \frac{|\cs|}{2} - 1$. Hence $\{\cs_j \,|\, \cs_j \subset \cs, \;0 \leq j \leq i\} \leq \frac{|\cs| -1}{2}$. Since this is true for every $i$, we may conclude $\gamma(\cs) \leq \lfloor \frac{|\cs|-1}{2} \rfloor$.
	
	To show equality, we proceed by induction. Again let $\cs \neq \cR$ and assume that the statement holds for every cluster strictly containing $\cs$. If $\gamma(\cs) < \lfloor \frac{|\cs|-1}{2} \rfloor$, then at any step $i$, we have $\frac{\nu_{\cs}}{2} - \sum_{j=0}^{i-1}d_{\cs_j \land \cs} -d_{\cs} >  \frac{\nu_{\cs'}}{2} - \sum_{j=0}^{i-1}d_{\cs_j \land \cs'} -d_{\cs'}$, hence $\cs_i \neq \cs'$. This implies $\gamma(\cs') = \sum_{\ct < \cs'}\gamma(\ct)$. Using that $|\cs'| = \sum_{\ct < \cs'}|\ct|$, $\gamma(\ct)\leq  \lfloor \frac{|\ct|-1}{2} \rfloor$ for every $\ct < \cs'$ and $\gamma(\cs) <\lfloor \frac{|\cs|-1}{2} \rfloor$, we get
	\[
	\gamma(\cs') = \sum_{\ct < \cs'} \gamma(\ct) 
	< \left\lfloor \frac{|\cs|-1}{2} \right\rfloor + \sum_{\cs \neq \ct < \cs'} \left\lfloor \frac{|\ct|-1}{2} \right\rfloor 
	\leq  \left\lfloor \frac{\sum_{\ct < \cs'} |\ct| - 1}{2} \right\rfloor 
	= \left\lfloor \frac{|\cs'|-1}{2} \right\rfloor.
	\]
	 This yields a contradiction. So $\gamma(\cs) = \lfloor \frac{|\cs|-1}{2} \rfloor$.  
	
\end{proof}

In order to prove Theorem \ref{thm:mainbasis}, we have to show that the differential forms defined in the theorem are global sections of $\omega_{\regX/R}$. In particular, we need to be able to compute the order of vanishing along components of the special fibre of the minimal regular model $\regX$. For that purpose we use the description of $\regX$ given in [Theorem 8.5]\cite{dokchitser2018arithmetic}. Broadly speaking, each principal cluster gives rise to one or possibly two components in the special fibre. These components are connected by chains of projective lines. In the following, we will write $\Gamma_{\cs} \subset \regX_s$ for the component(s) corresponding to the principal cluster $\cs$. 
On the other hand, a disc $D(\cs) \subset \bar{K}$ is associated to a principal cluster $\cs$. Equations for $\Gamma_{\cs}$ are now given in function of the disc $D(\cs)$ in \cite[Definition 5.4]{dokchitser2018arithmetic}. From this, we can conclude that there is an open neighbourhood $U_{\cs}$ of the generic point of $\Gamma_{\cs}$ which is contained in 
\[ 
\Spec\left(R[x_{\cs},y_{\cs}]/ (y_{\cs}^2 - f_{\cs}(x_{\cs})) \right) 
\] 
with local coordinates
\[
x_{\cs} = \frac{x-z_{\cs}}{\pi^{d_\cs}}, \;
y_{\cs} = \frac{y}{\pi^{\nu_{\cs}/2}}.
\]

\begin{lem}
	\label{lem:e_i}
	Let $C/K$ be a hyperelliptic curve defined by an integral Weierstraß equation $C:y^2=f(x)$ and $\Sigma$ the associated cluster picture. Let the clusters $\cs_0, \dots, \cs_{g-1} \in \Sigma$ be chosen according to Theorem \ref{thm:mainbasis}. Let $\regX/R$ be the minimal regular model.

	Then for any principal cluster $\cs \in \Sigma$, we have that  $-\left(\frac{\nu_{\cs}}{2} - \sum_{j=0}^{i-1}d_{\cs_j \land \cs} -d_{\cs}\right)$ is a lower bound for the order of vanishing of the element $\prod_{j=0}^{i-1}(x-z_{\cs_{j}}) \frac{dx}{2y}$ along the component of the special fibre of $\regX$ that corresponds to $\cs$.
\end{lem}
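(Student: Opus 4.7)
The plan is to work in the explicit affine chart around the generic point $\eta_{\cs}$ of $\Gamma_{\cs}$ recalled just before the lemma, namely $\Spec\bigl(R[x_{\cs},y_{\cs}]/(y_{\cs}^2-f_{\cs}(x_{\cs}))\bigr)$ with $x=z_{\cs}+\pi^{d_{\cs}}x_{\cs}$ and $y=\pi^{\nu_{\cs}/2}y_{\cs}$. Since $\cs$ is principal and $C$ is semistable, $\Gamma_{\cs}$ appears with multiplicity one in the special fibre, so $\pi$ is a uniformiser of the DVR $\OO_{\regX,\eta_{\cs}}$ and $\ord_{\Gamma_{\cs}}(\pi)=1$. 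For $a\in K$ this gives $\ord_{\Gamma_{\cs}}(a)=v(a)$, while $x_{\cs}$ is a unit at $\eta_{\cs}$ as it is transcendental over the residue field there.

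First I would rewrite $\frac{dx}{2y}$ in the local coordinates. From $dx=\pi^{d_{\cs}}\,dx_{\cs}$ and $2y=2\pi^{\nu_{\cs}/2}y_{\cs}$ we obtain
\[
\frac{dx}{2y} \;=\; \pi^{d_{\cs}-\nu_{\cs}/2}\cdot\frac{dx_{\cs}}{2y_{\cs}}.
\]
Because $f_{\cs}$ is separable (principality of $\cs$), $\frac{dx_{\cs}}{2y_{\cs}}$ is a local generator of $\omega_{\regX/R}$ at $\eta_{\cs}$, so this factor contributes exactly $d_{\cs}-\nu_{\cs}/2$ to $\ord_{\Gamma_{\cs}}$.

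Next, for each $j$ I would estimate $\ord_{\Gamma_{\cs}}(x-z_{\cs_j})$ by writing $x-z_{\cs_j}=\pi^{d_{\cs}}x_{\cs}+(z_{\cs}-z_{\cs_j})$ and splitting on whether $\cs_j\subseteq\cs$. If $\cs_j\not\subseteq\cs$, then $\cs\land\cs_j$ strictly contains $\cs$, and since $z_{\cs_j}$ lies outside the disc cut out by $\cs$ we have $v(z_{\cs}-z_{\cs_j})=d_{\cs\land\cs_j}<d_{\cs}$, so the constant term dominates and $\ord_{\Gamma_{\cs}}(x-z_{\cs_j})=d_{\cs\land\cs_j}$. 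If $\cs_j\subseteq\cs$, then $\cs\land\cs_j=\cs$ and $v(z_{\cs}-z_{\cs_j})\geq d_{\cs}$, while $\ord_{\Gamma_{\cs}}(\pi^{d_{\cs}}x_{\cs})=d_{\cs}$, so the sum has order $\geq d_{\cs}=d_{\cs\land\cs_j}$. In either case $\ord_{\Gamma_{\cs}}(x-z_{\cs_j})\geq d_{\cs_j\land\cs}$.

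Combining the two estimates yields
\[
\ord_{\Gamma_{\cs}}\!\left(\prod_{j=0}^{i-1}(x-z_{\cs_j})\,\frac{dx}{2y}\right) \;\geq\; \bigl(d_{\cs}-\tfrac{\nu_{\cs}}{2}\bigr)+\sum_{j=0}^{i-1}d_{\cs_j\land\cs} \;=\; -\!\left(\tfrac{\nu_{\cs}}{2}-\sum_{j=0}^{i-1}d_{\cs_j\land\cs}-d_{\cs}\right),
\]
which is exactly the claimed lower bound. The only delicate step is the case split for $\ord_{\Gamma_{\cs}}(x-z_{\cs_j})$; everything else is direct substitution in the local chart. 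The one further input needed is that $\frac{dx_{\cs}}{2y_{\cs}}$ generates $\omega_{\regX/R}$ at $\eta_{\cs}$, which follows from the smoothness of $\Gamma_{\cs}$ at its generic point, itself forced by $\cs$ being principal (so $f_{\cs}$ is separable).
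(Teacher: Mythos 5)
Your argument follows the same route as the paper's proof: work in the chart $\Spec\bigl(R[x_{\cs},y_{\cs}]/(y_{\cs}^2-f_{\cs}(x_{\cs}))\bigr)$ near the generic point of $\Gamma_{\cs}$, substitute $x=z_{\cs}+\pi^{d_{\cs}}x_{\cs}$ and $y=\pi^{\nu_{\cs}/2}y_{\cs}$, and pull out powers of $\pi$. The paper does this in a single computation, writing the product as $\pi^{\sum_j d_{\cs\land\cs_j}+d_{\cs}-\nu_{\cs}/2}\cdot\prod_{j}\bigl(\pi^{d_{\cs}-d_{\cs\land\cs_j}}x_{\cs}-\tfrac{z_{\cs}-z_{\cs_j}}{\pi^{d_{\cs\land\cs_j}}}\bigr)\tfrac{dx_{\cs}}{2y_{\cs}}$ and observing that $d_{\cs}\geq d_{\cs\land\cs_j}$ and $\tfrac{z_{\cs}-z_{\cs_j}}{\pi^{d_{\cs\land\cs_j}}}\in R$ make the second factor an integral section; your factor-by-factor estimate is the same bound.

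Two of your intermediate justifications are incorrect as stated, though harmlessly so, because only inequalities enter the final bound. First, in the case $\cs_j\not\subseteq\cs$ you assert $v(z_{\cs}-z_{\cs_j})=d_{\cs\land\cs_j}<d_{\cs}$ because ``$z_{\cs_j}$ lies outside the disc cut out by $\cs$''. This is correct when $\cs$ and $\cs_j$ are incomparable, but fails when $\cs\subsetneq\cs_j$ (e.g.\ $\cs_j=\cR$): any centre of $\cs$ is also a valid centre of $\cs_j$, so one may even have $z_{\cs_j}=z_{\cs}$ and $v(z_{\cs}-z_{\cs_j})=\infty$. Only the ultrametric inequality $v(z_{\cs}-z_{\cs_j})\geq d_{\cs\land\cs_j}$ holds in general, and that, together with $d_{\cs}\geq d_{\cs\land\cs_j}$, is all the lower bound requires. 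Second, your claim that principality of $\cs$ makes $f_{\cs}$ separable is false: the reduction of $f_{\cs}$ has a root of multiplicity $|\cs'|$ for each child $\cs'<\cs$, so it is inseparable as soon as $\cs$ has a non-singleton child. The fact you actually need --- that $\tfrac{dx_{\cs}}{2y_{\cs}}$ is an integral section (indeed a generator) of $\omega_{\regX/R}$ on this chart --- follows instead from the adjunction description of the dualising sheaf of the hypersurface $y_{\cs}^2=f_{\cs}(x_{\cs})$ in $\mathbb{A}^2_R$, with no separability input; this is also what the paper implicitly invokes when it calls the section integral. With these two repairs your proof coincides with the paper's; note also that the exact orders of vanishing you compute are more than the lemma asks for, since only a lower bound is claimed.
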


\begin{proof}
	Let $\cs$ be a principal cluster in $\Sigma$ and denote by $\Gamma_{\cs}$ the component (or possibly the two components) of the special fibre of $\regX/R$ corresponding to this cluster.
	
	It suffices to compute the order of vanishing of $\prod_{j=0}^{i-1}(x-z_{\cs_{j}}) \frac{dx}{2y}$ in an open neighbourhood $U$ of the generic point of the component $\Gamma_{\cs}$. We choose $U = U_{\cs}$ as described in the paragraph preceding this lemma. 
	
	We can write
	\begin{align*}
	\prod_{j=0}^{i-1}(x-z_{\cs_{j}}) \frac{dx}{2y} &= \prod_{j=0}^{i-1}(\pi^{d_{\cs}} x_{\cs}+z_{\cs}-z_{\cs_{j}}) \cdot \frac{d(\pi^{d_{\cs}}x_{\cs} +z_{\cs})}{2\pi^{\nu_{\cs}/2}y_{\cs}}\\
	&=\prod_{j=0}^{i-1}\pi^{d_{\cs \land \cs_j}} \cdot (\pi^{d_{\cs}-d_{\cs \land \cs_j}} x_{\cs} - \frac{z_{\cs}-z_{\cs_{j}}}{\pi^{d_{\cs \land \cs_j}}}) \cdot \frac{\pi^{d_{\cs}}}{\pi^{\nu_{\cs}/2}} \frac{dx_{\cs}}{2y_{\cs}}\\
	&=\pi^ {\sum_{j=0}^{i-1}d_{\cs \land \cs_j} + d_{\cs} - \frac{\nu_{\cs}}{2}} \cdot \prod_{j=0}^{i-1} (\pi^{d_{\cs}-d_{\cs \land \cs_j}} x_{\cs} - \frac{z_{\cs}-z_{\cs_{j}}}{\pi^{d_{\cs \land \cs_j}}}) \cdot \frac{dx_{\cs}}{2y_{\cs}}.
	\end{align*}
	Since  $d_{\cs} \geq d_{\cs \land \cs_j}$ and $\frac{z_{\cs}-z_{\cs_{j}}}{\pi^{d_{\cs \land \cs_j}}} \in R$ for all clusters $\cs, \cs_j \in \Sigma$, the element $\prod_{j=0}^{i-1} (\pi^{d_{\cs}-d_{\cs \land \cs_j}} x_{\cs} - \frac{z_{\cs}-z_{\cs_{j}}}{\pi^{d_{\cs \land \cs_j}}}) \frac{dx_{\cs}}{2y_{\cs}}$ is an integral section on $\omega_{\regX/R|_{U}}$. The statement of the lemma follows.

\end{proof}

\begin{proof}[Proof of Theorem \ref{thm:mainbasis}]
	Let $\cs_0, \dots, \cs_{g-1}$ be a sequence constructed as described in the theorem and denote by $\mu_0, \dots, \mu_{g-1}$ the differential forms associated to this sequence.
	
	\textbf{Claim 1:} The differentials $\mu_0, \dots \mu_{g-1}$ are  global sections of $\omega_{\regX/R}$.\\	
	Here, we are going to make use of the fact that inside $H^0(C, \Omega_{C/K})$ the global sections of the minimal regular model are equal to those of the stable model (cf. the proof of Proposition \ref{prop:H0basechange}). So by \cite[Theorem 5.24.]{dokchitser2018arithmetic}, it suffices to check that the differentials $\mu_0, \dots ,\mu_{g-1}$ are regular on the components of the special fibre corresponding to principal clusters.
	Let $\Gamma_{\cs}$ be a component of the special fibre corresponding to a principal cluster $\cs \in \Sigma$. From Lemma \ref{lem:e_i}, we know that  $-\left(\frac{\nu_{\cs}}{2} - \sum_{j=0}^{i-1}d_{\cs_j \land \cs} -d_{\cs}\right)$ is a lower bound for the order of the element $\prod_{j=0}^{i-1}(x-z_{\cs_{j}}) \frac{dx}{2y}$ on $\Gamma_{\cs}$. Since $e_i = \max_{\cs \in \Sigma} \left( \frac{\nu_{\cs}}{2} - \sum_{j=0}^{i-1}d_{\cs_j \land \cs} -d_{\cs}\right)$, the order of $\mu_i = \pi^{e_i} \cdot \prod_{j=0}^{i-1}(x-z_{\cs_{j}}) \frac{dx}{2y}$ is non-negative on every such component.
	
	For the horizontal part, we have to consider the restriction of $\mu_i$ to the generic fibre. Clearly $\mu_i \otimes 1 \in H^0(\regX,\omega_{\regX/R}) \otimes_R K = H^0(C,\Omega_{C/K})$. This proves the first claim. 
	
	\textbf{Claim 2:} Let $\lambda_C$ be the quantity defined in the previous section, then
	$\sum_{i=0}^{g-1} e_i = v(\lambda_C)$.
	By definition we have 
	\[2 \sum_{i=0}^{g-1} e_i~ 
	= \sum_{i=0}^{g-1}\left(\nu_{\cs_{i}}- 2 \sum_{j=0}^{i} d_{\cs_j \land \cs_i}\right).
	\]
	We divide this expression into two parts. For the first part, we use the formula from Remark \ref{rem:compute_vs} and get
	\begin{align*}
	 \sum_{i=0}^{g-1}\nu_{\cs_{i}}
	=&\; \sum_{i=0}^{g-1} \left(v(c_f) +
	d_{\cR} |\cR| +  \sum_{\cs \supset \cs_i} \delta_{\cs} |\cs|\right) 
	=g  (v(c_f) + d_{\cR} |\cR|) + \sum_{\cs \neq \cR} \delta_{\cs} |\cs| \gamma(\cs).
	\end{align*}
	The second part gives 
	\begin{align*}
	2 \sum_{i=0}^{g-1}\sum_{j=0}^{i} d_{\cs_j \land \cs_i} 
	= 2\sum_{i\geq j} \left(\sum_{\cs \supset \cs_j\land \cs_i} \delta_{\cs} +d_{\cR} \right)
	=  \sum_{\cs \neq \cR} \delta_{\cs} \gamma(\cs)(\gamma(\cs)+1) + d_{\cR} \gamma(\cR)(\gamma(\cR) + 1).
	\end{align*}
	For the first equality, we used the fact that the depth of a proper cluster $\ct$ may be written as $d_{\ct} = \sum_{\cs \supset \ct} \delta_{\cs} + d_{\cR}$.  For the second equality, we used that  $ \#\{\cs_j\land \cs_i \subseteq \cs~|~ j\leq i\} = \binom{\gamma(\cs)+1}{2}$ for every proper cluster $\cs$. By Lemma \ref{lem:gamma}, we have $\gamma(\cs) = \lfloor \frac{|\cs|-1}{2}\rfloor$.
	Combining the two parts, we obtain 
	\begin{align*}
	8  \sum_{i=0}^{g-1} e_i~ 
	=&~~ 4g \cdot v(c_f) + \sum_{\substack{|\cs| \textrm{ even}\\ \cs \neq \cR}} \delta_{\cs} \cdot (|\cs|-2)|\cs| + \sum_{\substack{|\cs| \textrm { odd},\\ \cs \neq \cR}} \delta_{\cs}(|\cs|-1)^2 \\ 
	&~~~~~~ + d_{\cR} \begin{cases} (|\cR|-2)|\cR|, & \text{if $|\cR| = 2g+2$},\\
	(|\cR|-1)^2, & \text{if $|\cR| = 2g+1$}.
	\end{cases}	
	\end{align*}
	The claim now follows from Theorem \ref{thm:main_lambda}.
	
	\textbf{Claim 3:}
	$\mu: = \mu_0 \land \dots \land \mu_{g-1}$ is a basis for $\det H^0(\regX,\omega_{\regX/R})$.\\	
	Let $\omega_0, \dots ,\omega_{g-1}$ denote the differentials associated to the Weierstraß equation $C: y^2 = f(x)$. By construction, we have that 
	\[(\mu_0,\dots,\mu_{g-1}) = (\omega_0, \dots ,\omega_{g-1}) \cdot A, \]
	where $A$ is a matrix of the form  
	\(  A = \begin{pmatrix}
	\pi^{e_0}	&  & \ast    \\
	& \ddots  & \\
	0 	& 	 & \pi^{e_{g-1}}
	\end{pmatrix}.\)\\
	Therefore \[\mu = \det(A) \cdot  \omega_0 \land \dots \land \omega_{g-1} =  \pi^{v(\lambda_C)} \cdot \omega_0 \land \dots \land \omega_{g-1}.\] So by Theorem \ref{thm:main_lambda}, $\mu$ is a basis for $\det H^0(\regX,\omega_{\regX/R})$.\\
	
	We have seen that $\mu_0,\dots,\mu_{g-1} \in H^0(\regX,\omega_{\regX/R})$ and that $\mu: = \mu_0 \land \dots \land \mu_{g-1}$ is a basis for $\det H^0(\regX,\omega_{\regX/R})$. Therefore $(\mu_0,\dots,\mu_{g-1})$ is a basis for $H^0(\regX,\omega_{\regX/R})$. 
	
\end{proof}

\end{document}